\documentclass[11pt]{article}
\usepackage{amsmath, amssymb, amsfonts, amsthm}
\usepackage{enumitem}
\usepackage{multicol}
\usepackage{hyperref}
\allowdisplaybreaks
\voffset=-1cm
\oddsidemargin -.15in
\evensidemargin -.15in
\textheight=630pt
\textwidth=450pt
\def \R {{\mathbb{R}}}
\def \Q {{\mathbb{Q}}}
\def \N {{\mathbb{N}}}
\def \Z {{\mathbb{Z}}}

\def \S {{\mathcal{S}}}

\def \ep {{\epsilon}}
\def \ze {{\zeta}}

\newcommand{\sgn}{\operatorname{sgn}}

\newtheorem*{theorem*}{Theorem}
\newtheorem{theorem}{Theorem}
\newtheorem{cor}[theorem]{Corollary}
\newtheorem{conj}[theorem]{Conjecture}
\newtheorem{lemma}[theorem]{Lemma}

\newtheorem{rem}[theorem]{Remark}

\newtheorem{pro}[theorem]{Proposition}

\title{An irreducible class of polynomials over integers}
\author {Biswajit Koley,  A.Satyanarayana Reddy\footnote{The research of this author is supported by Matrics MTR/2019/001206 of SERB, India.} \\
Department of 
Mathematics, Shiv Nadar 
University, India-201314\\ (e-mail: 
bk140@snu.edu.in, satyanarayana.reddy@snu.edu.in).
  }
\date{}
\begin{document}
\maketitle
\begin{abstract}
In this article, we consider polynomials of the form $f(x)=a_0+a_{n_1}x^{n_1}+a_{n_2}x^{n_2}+\cdots+a_{n_r}x^{n_r}\in \Z[x],$
where $|a_0|\ge |a_{n_1}|+\dots+|a_{n_r}|,$ $|a_0|$ is a prime power and $|a_0|\nmid |a_{n_1}a_{n_r}|$. We will show that under the strict inequality  these polynomials are irreducible for certain values of $n_1$. In the case of equality, apart from its cyclotomic factors, they have exactly one irreducible non-reciprocal factor.  
\end{abstract}
{\bf{Key Words}}: Irreducible polynomials, cyclotomic polynomials.\\
{\bf{AMS(2010)}}: 11R09, 12D05, 12D10.\\

\section{Introduction}\label{sec:intro}
The question of finding an irreducibility criterion for polynomials depending upon its coefficients has been studied extensively. One of the well-known criteria is  Eisenstein's criterion~\cite{eisen}, which demands prime decomposition of the coefficients of a given polynomial. Another famous criterion known as  Perron's criterion~\cite{perron}, does not require prime decomposition of the coefficients:
\begin{theorem}[Perron~\cite{perron}]
 Let $f(x)=x^n+a_{n-1}x^{n-1}+\dots+a_1x+a_0\in \Z[x]$ be a monic polynomial with $a_0\ne 0.$
 If $|a_{n-1}|>1+|a_{n-2}|+|a_{n-3}|+\dots+|a_1|+|a_0|,$ then $f(x)$ is irreducible over $\Z.$
\end{theorem}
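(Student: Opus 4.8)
The plan is to first locate the complex zeros of $f$ relative to the unit circle, and then to use that any nontrivial integer factor of a monic polynomial must have a nonzero integer constant term. Write $f(z)=a_{n-1}z^{n-1}+p(z)$, where $p(z)=z^n+a_{n-2}z^{n-2}+\dots+a_1z+a_0$. On the circle $|z|=1$ the hypothesis gives
\[
|p(z)|\le 1+|a_{n-2}|+\dots+|a_1|+|a_0|<|a_{n-1}|=|a_{n-1}z^{n-1}|,
\]
so by Rouch\'e's theorem $f$ and the monomial $a_{n-1}z^{n-1}$ have the same number of zeros (with multiplicity) in the open unit disk, namely $n-1$; the same strict inequality shows $f$ has no zero on $|z|=1$. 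Hence $f$ has exactly one zero $\alpha$ with $|\alpha|>1$ (necessarily simple), and all of its other zeros lie in $|z|<1$.

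Next I would argue by contradiction. Suppose $f=gh$ with $g,h\in\Z[x]$ of degree $\ge 1$. Since $f$ is monic we may take $g$ and $h$ monic (replace $(g,h)$ by $(-g,-h)$ if needed), and since $a_0=f(0)=g(0)h(0)\ne 0$, both $g(0)$ and $h(0)$ are nonzero integers, hence of absolute value at least $1$. The multiset of zeros of $f$ is the disjoint union of those of $g$ and those of $h$; the unique zero $\alpha$ with $|\alpha|>1$ lies in one of them, say in $g$. Then every zero of $h$ has modulus $<1$, so
\[
|h(0)|=\prod_{h(\beta)=0}|\beta|<1
\]
because $h$ is monic of positive degree, contradicting $|h(0)|\ge 1$. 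Therefore $f$ admits no such factorization and is irreducible over $\Z$ (equivalently, by Gauss's lemma, over $\Q$).

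The step where the hypothesis is genuinely used, and the one I regard as the crux, is the root localization: the inequality $|a_{n-1}|>1+|a_{n-2}|+\dots+|a_0|$ is exactly what forces the term $a_{n-1}z^{n-1}$ to dominate $f$ on the unit circle, pinning $n-1$ of the $n$ roots strictly inside it. Once that is in hand the remainder is a short integrality argument; the only points requiring care are the reduction to monic factors (legitimate since $f$ is monic and primitive) and the observation that $a_0\ne 0$ guarantees $h$ has a nonzero constant term, so that $\prod_{h(\beta)=0}|\beta|$ is an honest product of positive reals each $<1$. Beyond setting up Rouch\'e's theorem cleanly on $|z|=1$, I do not expect a serious obstacle.
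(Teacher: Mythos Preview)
The paper does not actually prove this theorem: it is quoted in the introduction as Perron's criterion with a citation to~\cite{perron} and is used only as historical motivation, so there is no proof in the paper to compare against. That said, your argument is correct and is essentially the standard proof: Rouch\'e on $|z|=1$ localizes $n-1$ roots strictly inside the unit disk, and then any proper monic integer factor avoiding the single large root would have constant term of modulus strictly less than~$1$, which is impossible for a nonzero integer. The only minor remarks are that the simplicity of the outside root $\alpha$ is irrelevant to the argument (you never use it), and that the reduction to monic factors is automatic since $f$ is monic in $\Z[x]$; otherwise every step is sound.
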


Finding an irreducibility criterion similar to Perron is of great interest to mathematicians. One of the works  in this direction is due to Panitopol and Stef\"anescu \cite{LP}. They studied polynomials with integer coefficients having a constant term as a prime number and  proved the following.
\begin{theorem}\label{th1}
If $p$ is a prime and $p>|a_1|+\cdots+|a_n|,$ then $a_nx^n+\cdots+a_1x\pm p$ is irreducible. 
\end{theorem}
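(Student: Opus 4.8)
The plan is to argue by contradiction, playing a lower bound forced by the prime constant term against an upper bound on the size of a suitable complex root. Write $f(x)=a_nx^n+\cdots+a_1x\pm p$ and suppose $f=gh$ with $g,h\in\Z[x]$, neither of them a unit; we may assume $f$ is non-constant, since otherwise there is nothing to prove.

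First I would pin down a convenient factor. Since $\deg g+\deg h=\deg f\ge 1$, at least one of $g,h$ is non-constant. If, say, $h$ is a non-unit constant, then $h$ divides every coefficient of $f$, in particular $h\mid p$, so $|h|=p$, and hence the non-constant factor $g$ has constant term $g(0)=f(0)/h=\pm 1$. If instead both $g,h$ are non-constant, then $g(0)h(0)=f(0)=\pm p$ with $p$ prime forces exactly one of $|g(0)|,|h(0)|$ to equal $1$. In every case I obtain a non-constant factor, which I relabel $g$, with $|g(0)|=1$.

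Next I would examine the roots of $g$ over $\C$. Writing $g(x)=c\prod_i(x-\alpha_i)$ with $c=\mathrm{lc}(g)\in\Z\setminus\{0\}$, we get $\prod_i|\alpha_i|=|g(0)|/|c|\le 1$, so $g$ — and therefore $f$ — has a root $\alpha\in\C$ with $|\alpha|\le 1$. Because $f(0)=\pm p\ne 0$, zero is not a root of $f$, so $\alpha\ne 0$. Substituting $\alpha$ into $f$ yields $a_1\alpha+\cdots+a_n\alpha^n=\mp p$, and hence
\[
  p=\bigl|a_1\alpha+\cdots+a_n\alpha^n\bigr|\le\sum_{k=1}^{n}|a_k|\,|\alpha|^k\le\sum_{k=1}^{n}|a_k|<p,
\]
where the middle inequality uses $|\alpha|^k\le|\alpha|\le 1$ for $k\ge 1$ and the last is the hypothesis. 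This contradiction shows $f$ is irreducible over $\Z$; since the content of $f$ divides $p$ and cannot equal $p$ (again by the hypothesis), $f$ is primitive, and irreducibility over $\Q$ follows by Gauss's lemma.

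The argument is short and I do not expect a serious obstacle; the only point needing genuine care is the first step — verifying that in \emph{every} possible factorization one can isolate a non-constant factor whose constant term is a unit — together with the observation that the extracted root is nonzero, which is exactly where the assumption that the constant coefficient equals $\pm p$ rather than $0$ is used.
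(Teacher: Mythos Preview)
The paper does not supply its own proof of Theorem~\ref{th1}; it is quoted as a result of Panitopol and Stef\u{a}nescu and then generalized. Your argument is correct and is the standard one. It is also exactly the template the paper follows for its own main result (Theorem~\ref{mainthm}): your root-location step is the content of Remark~\ref{rem:non-reciprocal}, and your ``one factor has constant term $\pm1$'' step is the trivial case (prime constant term) of what the paper must work hard to establish for prime powers in Theorem~\ref{mainlem}. So your proof both settles the cited statement and matches the paper's overall strategy; the only place the paper's and your arguments diverge in difficulty is precisely the constant-term step, which for a bare prime is immediate from unique factorization but for $p^u$ requires the lengthy analysis of Lemma~\ref{lem1}.
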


 A.I. Bonciocat and N.C. Bonciocat  extended this work  in~\cite{bonciocat1} and~\cite{bonciocat2} to prime powers. Before stating their results, we define $\tilde{g}(x)=x^{\deg(g)}g(x^{-1})$ as the {\em reciprocal polynomial} of the polynomial $g(x).$ It is easy to see that both $g(x)$ and $\tilde{g}(x)$ reducible or irreducible together provided $g(0)\ne 0$. A polynomial $f(x)$ is said to be {\em reciprocal} if $f(x)=\pm \tilde{f}(x),$ otherwise it is called {\em non-reciprocal}. With this notation and observation we express the results of A.I. Bonciocat and N.C. Bonciocat  in terms of $\tilde{f}(x)$.  In~\cite{bonciocat1} it is shown that if $p$ is a prime number, $p\nmid a_{2}a_0$, $p^u>|a_0a_{2}|p^{3e} +\sum_{i=3}^n |a_{0}^{i-1}a_{i}|p^{ie}$ and $u\not\equiv e\pmod{2}, u\ge 1, e\ge 0,$ then $a_nx^n+\cdots+a_{2}p^ex^2+a_0p^u$ is irreducible. In~\cite{bonciocat2} they proved a similar result for $a_{1}\ne 0$ instead of $a_{2},$ that is, if  $p\nmid a_0a_{1}$, $p^u>|a_{1}|p^{2e}+\sum_{i=2}^n|a_{0}^{i-1}a_{i}|p^{ie}, u\ge 1, e\ge 0,$ then $a_nx^n+\cdots+a_{1}p^ex+a_0p^u$ is irreducible.

Note that if $e=0, a_0=1$, then both of these conditions are the same as that of Theorem~\ref{th1}.  A similar study of the irreducibility of polynomials with constant term divisible by a prime or prime power can be found in \cite{bksr}, \cite{jankauskas}, \cite{jonassen}, \cite{lipka}, \cite{weisner}. For example, Jonassen\cite{jonassen} gave a complete factorization of trinomials of the form $x^n\pm x^m\pm 4$. He proved that they are irreducible except for six distinct families of polynomials.  Weisner~\cite{weisner} proved that if $p$ is a prime number and $n\ge 2, m\ge 1,$ then $x^n\pm x\pm p^m$ is irreducible whenever $p^m>2$. The authors~\cite{bksr} have shown that apart from cyclotomic factors, $x^n\pm x\pm 2$ has exactly one non-reciprocal irreducible factor.

Suppose $n_r>n_{r-1}>\cdots>n_1>0$ and $p$ is a prime number. Let 
$$\S_{n_1}=\{a_{n_r}x^{n_r}+a_{n_{r-1}}x^{n_{r-1}}+\cdots+a_{n_1}x^{n_1}+p^u\ep|u\ge 2, \ep=\pm 1, p\nmid a_{n_1}a_{n_r}\; \mbox{and} \;a_{n_i}\ne 0\}$$ and   $\S_{n_1}^\prime=\{\, f\in \S_{n_1}\mid u \not\equiv 0\pmod{n_1}\,\}$. It is clear that for $n_1=1$, $\S_{n_1}'=\emptyset$. With these notations, the results of \cite{bonciocat1} and \cite{bonciocat2} can be combined as: {\em $f\in \S_1\cup \S_2'$ is irreducible if $p^u>|a_{n_1}|+\cdots+|a_{n_r}|$ }. The main result in  this article is the following.

\begin{theorem}\label{mainthm}
Let $f(x)=a_{n_r}x^{n_r}+a_{n_{r-1}}x^{n_{r-1}}+\cdots+a_{n_1}x^{n_1}+p^u\ep\in \S_1\cup \S_2'\cup \S_3'$ and $p^u>|a_{n_1}|+\cdots+|a_{n_r}|$.  Then $f(x)$ is irreducible. 
\end{theorem}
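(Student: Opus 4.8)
The strategy is a contradiction argument combining one archimedean estimate with a $p$-adic analysis of the constant term. Since $p\nmid a_{n_r}$, the polynomial $f$ is primitive, so by Gauss's lemma it suffices to exclude a factorization $f=gh$ with $g=\sum_i b_ix^i$ and $h=\sum_j c_jx^j$ in $\Z[x]$ both non-constant. The hypothesis $p^u>|a_{n_1}|+\cdots+|a_{n_r}|$ forces every complex root $z$ of $f$ to satisfy $|z|>1$: if $|z|\le 1$ then $|f(z)-\ep p^u|\le\sum_i|a_{n_i}|\,|z|^{n_i}\le\sum_i|a_{n_i}|<p^u$, so $f(z)\ne 0$. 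Hence $|g(0)|$, being the modulus of the leading coefficient of $g$ times the product of the moduli of its roots, is $>1$; and since $g(0)h(0)=\ep p^u$ with $p$ prime, both $|g(0)|$ and $|h(0)|$ are powers of $p$ exceeding $1$, so $v_p(b_0)=:\mu\ge 1$, $v_p(c_0)=:\nu\ge 1$ and $\mu+\nu=u$.

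Reducing modulo $p$ and using $u\ge 1$ together with $p\nmid a_{n_1}$, we get $\bar f=x^{n_1}\phi(x)$ in $\F_p[x]$ with $\phi(0)\ne 0$, so $x^{n_1}$ exactly divides $\bar f$. Since $p$ divides neither leading coefficient of $g$ nor of $h$, reduction preserves their degrees; since $p\mid b_0$ and $p\mid c_0$, both $\bar g$ and $\bar h$ are divisible by $x$. Writing $\bar g=x^{s}\bar g_1$ and $\bar h=x^{t}\bar h_1$ with $\bar g_1(0)\bar h_1(0)\ne 0$, we obtain $s,t\ge 1$ and $s+t=n_1$. For $f\in\S_1$ this is already impossible, settling that case. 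For $n_1\in\{2,3\}$ we also read off $v_p(b_s)=0$ and $v_p(b_i)\ge 1$ for $i<s$ (and likewise for $h$ with $t$ in place of $s$).

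If $n_1=2$ then $s=t=1$, so $v_p(b_1)=v_p(c_1)=0$; the (vanishing) coefficient of $x^1$ in $f=gh$ is $b_0c_1+b_1c_0=0$, whose two summands therefore have equal $p$-adic valuation, giving $\mu=\nu$ and hence $u=2\mu$ even, contradicting $u$ odd (i.e. $f\in\S_2'$). If $n_1=3$ then $\{s,t\}=\{1,2\}$; after possibly interchanging $g$ and $h$, take $s=1$ and $t=2$, so $v_p(b_1)=0$, $v_p(c_1)\ge 1$, $v_p(c_2)=0$. The vanishing coefficient of $x^1$, $b_0c_1+b_1c_0=0$, forces $v_p(c_1)=\nu-\mu$, whence $\nu>\mu$; then in the vanishing coefficient of $x^2$, $b_0c_2+b_1c_1+b_2c_0=0$, the three summands have valuations $\mu$, $\nu-\mu$ and $\ge\nu$, and as $\nu>\mu$ the last strictly exceeds the first two, so the sum can vanish only if $\mu=\nu-\mu$, i.e. $\nu=2\mu$ and $u=3\mu$, contradicting $3\nmid u$ (i.e. $f\in\S_3'$). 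In all cases $f$ is irreducible.

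The conceptual picture behind the $p$-adic step is that the Newton polygon of $f$ at $p$ is the segment from $(0,u)$ to $(n_1,0)$ followed by a horizontal segment, so for any monic-normalized factor $g$ over $\Q_p$ the value $v_p(g(0))$ must be an integer multiple of $u/n_1$; when $n_1\in\{1,2,3\}$ the hypothesis $u\not\equiv 0\pmod{n_1}$ is equivalent to $\gcd(u,n_1)=1$, which pushes all the ``small'' roots into a single factor and makes the constant term of the other factor a unit --- the contradiction. I expect this valuation bookkeeping to be the crux, and it is precisely where $n_1\le 3$ is used: for $n_1\ge 4$ one can have $\gcd(u,n_1)>1$ while still $u\not\equiv 0\pmod{n_1}$ (e.g. $n_1=4$, $u=2$), the small roots may split between $g$ and $h$, and the argument no longer yields a contradiction. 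A minor point to handle is the degenerate cases --- binomials ($r=1$), or factors whose degree is close to $n_1$ --- where some $b_i$ or $c_j$ vanish; but a vanishing coefficient has $p$-adic valuation $+\infty$, leaving all the inequalities above valid.
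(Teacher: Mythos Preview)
Your proof is correct and takes a genuinely different route from the paper. Both arguments share the archimedean step (Remark~\ref{rem:non-reciprocal}) that all roots of $f$ lie in $|z|>1$, and both aim to show that in any nontrivial factorization $f=gh$ one of $|g(0)|,|h(0)|$ equals $1$, which then contradicts that estimate. The paper establishes this ``constant term'' fact by Ljunggren's device: it forms $g=f_1\tilde f_2$, uses the identity $g\tilde g=f\tilde f$, and carries out a lengthy case analysis on the high-order coefficients of this product (Lemma~\ref{lem1} and Theorem~\ref{mainlem}) to force the auxiliary exponent $\alpha$ to be $0$. You instead run a direct $p$-adic argument on the low-order coefficients of $f=gh$ itself: reduction modulo $p$ gives $\bar f=x^{n_1}\phi(x)$ with $\phi(0)\neq 0$, so $x$ divides each of $\bar g,\bar h$ to orders $s,t\ge 1$ with $s+t=n_1$, and then the vanishing of the coefficients of $x$ and $x^2$ in $f$ pins down $v_p(g(0))$ and $v_p(h(0))$ and forces $n_1\mid u$, contradicting $f\in\S_{n_1}'$. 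This is essentially the Newton polygon at $p$ (your final paragraph makes this explicit), and it is considerably shorter and more transparent than the paper's coefficient-matching; it also makes immediately clear why the bound $n_1\le 3$ matters, namely that $u\not\equiv 0\pmod{n_1}$ coincides with $\gcd(u,n_1)=1$ only for $n_1\le 3$. What the paper's approach buys in return is that its Lemma~\ref{lem1}/Theorem~\ref{mainlem} are proved without invoking the strict inequality $p^u>\sum|a_{n_i}|$, so the same lemma is reused verbatim in the equality case (Theorem~\ref{cyclothm}); your argument uses the strict inequality at the outset to get $\mu,\nu\ge 1$, so it would need a small modification to cover that case.
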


Above result need not be  true if $f(x)\in \S_2\cup \S_3.$ For example, $x^4+4\ep x^3+3^3\in \S_3\setminus \S_3'$ and  
$$x^4+4\ep x^3+3^3=(x+3\ep)^2(x^2-2\ep x+3),\;\; \mbox{where}\; \ep=\pm 1.$$ In \cite{bonciocat1}, it is given that $
f(x)=x^4+(2^{k+1}-1)x^2+2^{2k}=(x^2+x+2^k)(x^2-x+2^k)\in \S_2\setminus \S_2',$ for every $k\ge 1$.  Another example, $x^7+x^5+x^3+2^3=(x^3-x^2-x+2)(x^4+x^3+3x^2+2x+4)$  is the problem $007:14$ stated at West Coast Number Theory conference in 2007 by  Walsh~\cite{myerson}. The  example 
\begin{equation*}
x^{12}+x^8+x^4-16=(x^3-x^2-x+2)(x^3+x^2-x-2)(x^6+3x^4+5x^2+4)
\end{equation*}
is collected from~\cite{jankauskas}.
 
The following examples  illustrate the necessity of the condition $p\nmid a_{n_r}a_{n_1}$
in the definition of $\S_{n_1}:$
\begin{align}
 & x^3-x^2-10x+16=(x-2)(x^2+x-8);\notag\\ 
 & 2x^3-3x^2-27=(x-3)(2x^2+3x+9);\notag\\
 & 3x^6+x^5-3x^3-81=(x^2-3)(3x^4+x^3+9x^2+27);\notag\\
 & x^8+2x^6+6x^4-81=(x^2+3)(x^6-x^4+9x^2-27).\notag
\end{align}
The last example shows that it is not possible to drop the condition $p|a_{n_1}$ even for larger values of $n_1$. The condition on the coefficients  given in Theorem \ref{mainthm} enforces all the roots of $f$ to lie outside the unit circle. More generally,

\begin{rem}\label{rem:non-reciprocal}
 Let $f(x)=a_{n_r}x^{n_r}+a_{n_{r-1}}x^{n_{r-1}}+\cdots+a_{n_1}x^{n_1}+a_0$ be any polynomial of degree $n_r$ and $|a_0|>|a_{n_1}|+\cdots+|a_{n_r}|.$  Then every root of $f(x)$
 lies outside the unit circle.
 \begin{proof}
Let $z$ be a root of $f(x)$ with $|z|\le 1$. Then $f(z)=0$ and taking modulus on both sides of 
\begin{equation*}
-a_0=a_{n_1}z^{n_1}+\cdots+a_{n_r}z^{n_r},
\end{equation*}
we get $|a_0|\le |a_{n_1}|+\cdots+|a_{n_r}|$ which contradicts the hypothesis. Therefore, all the roots of $f(x)$ lies in the region $|z|>1$.  
 \end{proof}
\end{rem}
Recall that if $z\ne 0$ is a root of a reciprocal polynomial, then so is  $\frac{1}{z}.$  In other words, every reciprocal polynomial contains a root that lies inside or on the unit circle. Hence, if $f(x)$ satisfies the hypothesis of Remark~\ref{rem:non-reciprocal}, then every factor of $f(x)$ is non-reciprocal. A natural question: is it possible to find number of factors of $f(x)?$

 The remark of  Schinzel given by Jankauskas in~\cite{jankauskas} states that there are at most $\Omega(k)$ irreducible non-reciprocal factors for polynomials of the form $x^n+x^m+x^r+k, k\in \N$, where $\Omega(k)$ denotes the total number of prime factors of $k$ with repetitions. Jankauskas~\cite{jankauskas} gave the following example
\begin{equation*}
x^{12}+x^8+x^4+52=(x^2-2x+2)(x^2+2x+2)(x^8-3x^4+13)
\end{equation*}
to establish the sharpness of the above remark. However, for the present family of polynomials, the number of irreducible factors is usually much less than that of $\Omega(p^u)=u$. We apply the method followed by Ljunggren\cite{lju} to study the behavior of the factors of $f(x)\in \S_{n_1}$ and we will show that 
\begin{cor}\label{cor:n_1=2}
Suppose $f(x)=a_{n_r}x^{n_r}+a_{n_{r-1}}x^{n_{r-1}}+\cdots+a_{n_1}x^{n_1}+p^u\ep\in \S_{n_1}$ is reducible, where $n_1\in \{2,3\}$ and $p^u>|a_{n_1}|+\cdots+|a_{n_r}|$. Then $f(x)$ has at most $n_1$   non-reciprocal irreducible factors. 
\end{cor}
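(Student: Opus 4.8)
The plan is to combine Remark~\ref{rem:non-reciprocal} with a reduction modulo $p$, exploiting the long gap $a_1=\cdots=a_{n_1-1}=0$ in $f$. First I would dispose of the word ``non-reciprocal''. By Remark~\ref{rem:non-reciprocal} every root of $f$ lies strictly outside the unit circle, hence so does every root of every factor of $f$; since the roots of a reciprocal polynomial come in pairs $\{z,1/z\}$ (with $z=\pm1$ the only self-paired case), a non-constant reciprocal polynomial must have a root in the closed unit disc. Therefore no non-constant factor of $f$ is reciprocal, and it suffices to bound the number of irreducible factors of $f$, counted with multiplicity, by $n_1$.

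Write $f=\phi_1\phi_2\cdots\phi_k$ with each $\phi_j\in\Z[x]$ irreducible. The key structural observation is that $p\mid\phi_j(0)$ for every $j$. Indeed, $f$ is primitive: any common divisor of its coefficients divides both $a_{n_1}$ and $p^u$, hence equals $\pm1$ because $p\nmid a_{n_1}$; consequently no $\phi_j$ is a prime integer, so each $\phi_j$ has degree $\ge1$. Its roots are among those of $f$, so $|\phi_j(0)|=|\mathrm{lead}(\phi_j)|\prod_{z}|z|>1$ (at least one root, each of modulus $>1$), while $\phi_j(0)\mid f(0)=p^u\epsilon$ forces $|\phi_j(0)|$ to be a power of $p$ exceeding $1$; hence $p\mid\phi_j(0)$.

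Now reduce modulo $p$. Since the only terms of $f$ are $p^u\epsilon$ and the $a_{n_i}x^{n_i}$, we get $\overline f=\overline{a_{n_r}}x^{n_r}+\cdots+\overline{a_{n_1}}x^{n_1}=x^{n_1}\overline h(x)$ in $\F_p[x]$ with $\overline h(0)=\overline{a_{n_1}}\ne0$; thus $\overline f\ne0$ and the exact power of $x$ dividing $\overline f$ is $x^{n_1}$. From $p\mid\phi_j(0)$ we get $x\mid\overline{\phi_j}$ for every $j$, so, comparing orders of vanishing at $0$ in the identity $\overline f=\overline{\phi_1}\cdots\overline{\phi_k}$,
\[
n_1=v_x(\overline f)=\sum_{j=1}^{k}v_x(\overline{\phi_j})\ \ge\ k,
\]
which is exactly the assertion. (The argument uses neither $p\nmid a_{n_r}$ nor the restriction $n_1\in\{2,3\}$, so it holds for every $n_1\ge1$.)

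I do not anticipate a genuine obstacle here: the whole proof is this short reduction once Remark~\ref{rem:non-reciprocal} is in hand. The only step demanding a little care is showing that $p$ divides the constant term of \emph{every} irreducible factor, since it uses simultaneously that the factor is non-constant of positive degree (so its root-product is truly $>1$), that $f(0)$ is a prime power, and that $p\nmid a_{n_1}$; everything after that is bookkeeping with the $x$-adic valuation in $\F_p[x]$.
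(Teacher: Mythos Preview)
Your argument is correct, and it is cleaner and more general than the route the paper takes. The paper deduces Corollary~\ref{cor:n_1=2} by going back into the case analysis of Lemma~\ref{lem1} and Theorem~\ref{mainlem}: for a factorization $f=f_1f_2$ with $g=f_1\tilde f_2$ and $b_0=p^\alpha d$, that analysis shows the only surviving possibilities are $\alpha\in\{0,u/n_1\}$ when $n_1\in\{2,3\}$; the strict inequality $p^u>\sum|a_{n_i}|$ (via Remark~\ref{rem:non-reciprocal}) rules out $\alpha=0$, and this pins down the $p$-adic valuation of each factor's constant term, from which the bound on the number of factors is read off. Your proof bypasses that machinery entirely: once Remark~\ref{rem:non-reciprocal} forces $|\phi_j(0)|>1$ and hence $p\mid\phi_j(0)$ for every irreducible factor, the identity $\overline f=x^{n_1}\overline h$ with $\overline h(0)=\overline{a_{n_1}}\neq0$ in $\F_p[x]$ gives $k\le n_1$ immediately by comparing $x$-adic valuations. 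The payoff is that your argument uses only $p\nmid a_{n_1}$ (not $p\nmid a_{n_r}$), makes no use of the restriction $n_1\in\{2,3\}$, and counts factors with multiplicity; the paper's approach, by contrast, is tied to the specific coefficient-comparison computations carried out only for $n_1\le 3$, though those computations are what drive the main irreducibility theorems.
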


Later we consider the equality condition $p^u=|a_{n_1}|+\cdots+|a_{n_r}|$. The authors have already considered the case $u=1$ and $p=|a_{n_1}|+\cdots+|a_{n_r}|$ in~\cite{bksr}. Here we will establish similar results for $u\ge 2$. 

\begin{theorem}\label{cyclothm}
Let $f(x)=a_{n_r}x^{n_r}+a_{n_{r-1}}x^{n_{r-1}}+\cdots+a_{n_1}x^{n_1}+p^u\ep\in \S_1\cup \S_2'\cup \S_3'$ be  reducible and $|a_{n_r}|+|a_{n_{r-1}}|+\cdots+|a_{n_1}|=p^u$. Then 
$f(x)=f_c(x)f_n(x)$, where $f_n(x)$ is the irreducible non-reciprocal factor of $f(x)$ and 
$f_c(x)=\gcd(x^{n_1}+\sgn(a_{n_1}\ep),\ldots,x^{n_r}+\sgn(a_{n_r}\ep) ),$ $\sgn(x)$ being  the sign of $x\in \R.$ 
\end{theorem}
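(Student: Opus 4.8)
The plan is to combine the structural information from Theorem~\ref{mainthm} with a careful analysis of what happens at the boundary $|a_{n_r}|+\cdots+|a_{n_1}|=p^u$. The starting observation is that the strict inequality in Theorem~\ref{mainthm} was used only to force irreducibility; here we have equality, so reducibility is genuinely possible, and we want to identify exactly which factor appears. First I would note that by Remark~\ref{rem:non-reciprocal} (applied with the non-strict hypothesis weakened appropriately — in fact under equality roots can land \emph{on} the unit circle), the only way a root $z$ of $f$ can satisfy $|z|\le 1$ is if equality holds throughout the triangle inequality $|p^u|=|a_{n_1}z^{n_1}+\cdots+a_{n_r}z^{n_r}|\le |a_{n_1}|+\cdots+|a_{n_r}|$. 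That forces $|z|=1$ and forces all the terms $a_{n_i}z^{n_i}$ to be positive real multiples of a common unit, i.e. $a_{n_i}z^{n_i} = |a_{n_i}|\,\omega$ for a fixed $\omega$ on the unit circle, with $-p^u\ep \cdot \overline{\omega} = p^u$, so $\omega = -\ep$ (taking $\ep=\pm1$ real). Hence $a_{n_i}z^{n_i} = -\ep|a_{n_i}|$, i.e. $z^{n_i} = -\ep\,\sgn(a_{n_i}) = -\sgn(a_{n_i}\ep)$ for every $i$. This is exactly the statement that $z$ is a common root of the polynomials $x^{n_i}+\sgn(a_{n_i}\ep)$, equivalently a root of $f_c(x) = \gcd_i\bigl(x^{n_i}+\sgn(a_{n_i}\ep)\bigr)$.

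Next I would show $f_c(x)\mid f(x)$. Each $x^{n_i}+\sgn(a_{n_i}\ep)$ is a product of cyclotomic polynomials (it divides $x^{2n_i}-1$), so $f_c$ is a product of distinct cyclotomic polynomials, in particular squarefree and with all roots on the unit circle. For a root $z$ of $f_c$ we have $z^{n_i} = -\sgn(a_{n_i}\ep)$ for all $i$, hence $a_{n_i}z^{n_i} = -\ep|a_{n_i}|$, and therefore $f(z) = -\ep\sum_i |a_{n_i}| + p^u\ep = \ep\bigl(p^u - \sum_i|a_{n_i}|\bigr) = 0$ by the equality hypothesis. Since $f_c$ is squarefree, every root of $f_c$ is a simple root of $f_c$ and a root of $f$, so $f_c\mid f$ in $\Q[x]$, hence in $\Z[x]$ up to the (monic) normalization — here $f_c$ is monic so $f_c\mid f$ in $\Z[x]$. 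Write $f = f_c\cdot g$. By the reasoning in the previous paragraph, any root of $g$ with $|z|\le1$ would have to be a root of $f_c$ too; but we must also rule out $g$ and $f_c$ sharing a root, which follows once we know $f_c$ exhausts the cyclotomic part — this is where the hypothesis $f\in\S_1\cup\S_2'\cup\S_3'$ enters.

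The heart of the argument is then: $g(x)$ has \emph{all} its roots strictly outside the unit circle, so $g$ is non-reciprocal (by the reciprocal-root remark quoted before Corollary~\ref{cor:n_1=2}), and moreover $g$ is irreducible. For irreducibility of $g$ I would adapt the proof of Theorem~\ref{mainthm}: the mechanism there (following Ljunggren's method, cf.\ \cite{lju}, and the $\S_{n_1}'$ condition $u\not\equiv 0\pmod{n_1}$) should show that after removing the cyclotomic part, what remains cannot split further. Concretely, suppose $g = g_1 g_2$ nontrivially; evaluating at $x=0$, $g_1(0)g_2(0) = g(0) = p^u\ep / f_c(0)$, and since $f_c(0) = \prod_i \sgn(a_{n_i}\ep) = \pm1$, we get $|g_1(0)g_2(0)| = p^u$, so one of them, say $|g_1(0)|=p^v$ with $1\le v\le u-1$ (neither can be a unit since $g$ has no cyclotomic or linear unit factors — this is exactly where $\S_3'$ versus $\S_3$ matters, to exclude factors like $(x\pm\text{unit})$ absorbing a prime). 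Then the product of the roots of $g_1$ has absolute value $|g_1(0)/\text{lead}(g_1)| = p^v/|\text{lead}(g_1)| \ge p^v \ge p$, contradicting that all roots of $g_1$ lie outside the unit circle unless $\deg g_1$ is large — and the precise Perron-type counting from the proof of Theorem~\ref{mainthm}, together with $p\nmid a_{n_1}a_{n_r}$ and $u\not\equiv 0 \pmod{n_1}$, closes the gap. I would invoke Theorem~\ref{mainthm}'s proof essentially verbatim here, the only change being that the strict inequality is replaced by equality plus the extracted factor $f_c$.

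The main obstacle I anticipate is the clean separation $f = f_c f_n$ with $f_n$ \emph{irreducible}: it is not enough that $f_n$ has all roots off the unit circle (that only gives non-reciprocal), one genuinely needs the Ljunggren/Perron machinery from Theorem~\ref{mainthm} to survive the passage from strict inequality to equality, and one must be careful that $f_c$ as defined by the gcd is \emph{exactly} the cyclotomic part of $f$ — no cyclotomic factor of $f$ is missed and none is double-counted. Verifying that every cyclotomic factor of $f$ actually divides $f_c$ requires showing that if $\Phi_d\mid f$ then a primitive $d$-th root $\zeta$ satisfies $\zeta^{n_i} = -\sgn(a_{n_i}\ep)$ for all $i$, which is precisely the equality-in-triangle-inequality analysis from the first paragraph applied to $z=\zeta$; so the argument is self-contained, but the bookkeeping around multiplicities (ensuring $f_c^2 \nmid f$, equivalently $f$ has no repeated cyclotomic factor) needs the $\S'$ hypotheses and a short gcd/derivative check. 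I would present that check as the final lemma-style step.
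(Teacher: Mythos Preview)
Your identification of the cyclotomic part via the equality case of the triangle inequality is correct and essentially reproduces the paper's Proposition~\ref{pro:fc(x)}. The gap is in the irreducibility of $f_n=f/f_c$.

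Your direct attack on $g=f/f_c$ does not close. If $g=g_1g_2$ with $|g_1(0)|=p^v$, $1\le v\le u-1$, and all roots of $g_1$ lie strictly outside the unit circle, then $\prod_i|z_i|=p^v/|\mathrm{lead}(g_1)|$ being large is \emph{consistent} with $|z_i|>1$; there is no contradiction to extract, regardless of $\deg g_1$. Your claim that ``neither can be a unit'' is also off: an irreducible integer polynomial with constant term $\pm 1$ need not be cyclotomic (e.g.\ $x^2-3x+1$), so you cannot rule out $|g_1(0)|=1$ by that remark alone---what actually rules it out is precisely the Kronecker argument you want to use elsewhere. Finally, the appeal to ``Perron-type counting from the proof of Theorem~\ref{mainthm}'' is empty as stated: the proof of Theorem~\ref{mainthm} in the paper is one line (Theorem~\ref{mainlem} plus Remark~\ref{rem:non-reciprocal}), and the Ljunggren comparison $f\tilde f=g\tilde g$ lives in Lemma~\ref{lem1}/Theorem~\ref{mainlem}, which is run on $f$, not on $f/f_c$. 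Your $g$ is not in $\S_1\cup\S_2'\cup\S_3'$ and its coefficients are uncontrolled, so there is no reason that machinery transfers to $g$.

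The paper's route avoids all of this by applying Theorem~\ref{mainlem} directly to $f$. The key point you are missing is that Theorem~\ref{mainlem} assumes \emph{no} inequality between $p^u$ and $\sum_i|a_{n_i}|$: it holds for every reducible $f\in\S_1\cup\S_2'\cup\S_3'$. It yields one irreducible factor $f_2$ with $|f_2(0)|=p^u$; every other irreducible factor then has constant term $\pm1$. For each such factor $h$, the roots satisfy $|z_i|\ge 1$ while $\prod_i|z_i|=1/|\mathrm{lead}(h)|\le 1$, forcing $|z_i|=1$ for all $i$ and $|\mathrm{lead}(h)|=1$; Kronecker's theorem then makes $h$ cyclotomic. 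Thus the complement of $f_2$ is exactly the cyclotomic part, which your (correct) triangle-inequality analysis identifies with $f_c$. No separate multiplicity or derivative check is needed at this stage; separability of the cyclotomic part is handled inside Proposition~\ref{pro:fc(x)}.
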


  The following example does not satisfies the hypothesis of Theorem~\ref{cyclothm} and it has more than one non-reciprocal factor. However, the cyclotomic factor arises from the expression of $f_c(x)$ given in Theorem \ref{cyclothm},
\begin{equation*}
4x^6+5x^2+9=(x^2+1)(2x^2-4x+3)(2x^2+4x+3).
\end{equation*}

There are polynomials for which $p|a_{n_1}a_{n_r}$ and they may or may not be of the form $f_c(x)f_n(x)$. For example, 

\begin{enumerate}
 \item[$n_1=1$:] \begin{align}
& 3x^4+11x^2+2x+16=(x^2+x+2)(3x^2-3x+8);\notag\\
& 9x^5+5x^3+2x+16=(x+1)(9x^4-9x^3+14x^2-14x+16),\label{eq:eq3}
\end{align}

 \item[$n_1=2$:]  \begin{align}
& 3x^8+2x^6+9x^4+2x^2+16=(x^4-x^2+2)(3x^4+5x^2+8);\notag\\
& 9x^{10}+5x^6+2x^2+16=(x^2+1)(9x^8-9x^6+14x^4-14x^2+16),\label{eq:eq2}
\end{align}

 \item[$n_1=3$:] 
\begin{align}
& 3x^{12}+11x^6+2x^3+16=(x^6+x^3+2)(3x^6-3x^3+8);\notag\\
& 5x^{15}+9x^9+2x^3+16=(x+1)(x^2-x+1)(5x^{12}-5x^9+14x^6-14x^3+16),\label{eq:eq1}
\end{align}
\end{enumerate}

Equations \eqref{eq:eq3},\eqref{eq:eq2},\eqref{eq:eq1} shows that the form of $f_c(x)$ in the Theorem~\ref{cyclothm}       is same when $n_1\le 3$ even though they do not belong to $\S_{n_1}$. This motivates us to show that the second part of Theorem~\ref{cyclothm} is true even for a larger class of polynomials.

\begin{pro}\label{pro:fc(x)}
Let $f(x)=a_{n_r}x^{n_r}+\cdots+a_{n_1}x^{n_1}+a_0\in \Z[x]$ be a polynomial with $|a_0|= |a_{n_1}|+\cdots+|a_{n_r}|$ and $f_c(x)= \gcd(x^{n_r}+\sgn(a_0a_{n_r}), x^{n_{r-1}}+\sgn(a_0a_{n_{r-1}}), \ldots, x^{n_1}+\sgn(a_0a_{n_1}))$. If $f(x)$ has a cyclotomic factor, then $f_c(x)|f(x)$ and  $f_c(x)$ is the product of all cyclotomic factors of $f(x).$ 
\end{pro}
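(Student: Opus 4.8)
The plan is to exploit the rigidity forced by the equality $|a_0| = |a_{n_1}| + \cdots + |a_{n_r}|$ together with the fact that a cyclotomic root $z$ satisfies $|z| = 1$. First I would suppose $\zeta$ is a root of unity with $f(\zeta) = 0$. Writing $-a_0 = a_{n_1}\zeta^{n_1} + \cdots + a_{n_r}\zeta^{n_r}$ and taking absolute values gives $|a_0| \le |a_{n_1}| + \cdots + |a_{n_r}| = |a_0|$, so equality holds throughout the triangle inequality. The equality case of the triangle inequality forces all the terms $a_{n_i}\zeta^{n_i}$ to be non-negative real multiples of a single complex number; comparing with $-a_0 \in \R$, each $a_{n_i}\zeta^{n_i}$ must be a real number with the same sign as $-a_0$, i.e. $\sgn(a_{n_i})\,\zeta^{n_i} = -\sgn(a_0)$, equivalently $\zeta^{n_i} = -\sgn(a_0 a_{n_i})$ for every $i$. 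Thus $\zeta$ is a common root of all the binomials $x^{n_i} + \sgn(a_0 a_{n_i})$, hence a root of their gcd $f_c(x)$. Conversely, every root of $f_c(x)$ is a root of unity (a gcd of binomials $x^{n_i}\pm 1$ divides $x^{2N}-1$ for $N = \operatorname{lcm}$ of the $n_i$) and, by construction, satisfies $a_{n_i}\zeta^{n_i} = -\sgn(a_0)|a_{n_i}|$ for all $i$, so $f(\zeta) = \sum a_{n_i}\zeta^{n_i} + a_0 = -\sgn(a_0)\sum|a_{n_i}| + a_0 = 0$. Therefore the set of cyclotomic roots of $f$ is exactly the root set of $f_c$.

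Next I would upgrade this root-set equality to a divisibility statement with the correct multiplicities. Since $f_c(x)$ is a gcd of squarefree polynomials it is squarefree, so $f_c \mid f$ follows once we know every root of $f_c$ is a simple root of $f$ — or, more cleanly, it suffices to show $f_c \mid f$ in $\Z[x]$ directly. For the latter, note $f_c(x) = \prod_{d \in D} \Phi_d(x)$ for some set $D$ of indices (being a gcd of products of cyclotomics), and each such $\Phi_d$ divides $f$ because $f$ vanishes at a primitive $d$-th root of unity and $\Phi_d$ is its minimal polynomial over $\Q$; since distinct $\Phi_d$ are coprime, their product $f_c$ divides $f$. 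To see that $f_c$ captures \emph{all} cyclotomic factors of $f$: if $\Phi_d \mid f$ then a primitive $d$-th root of unity $\zeta$ is a root of $f$, hence by the first paragraph a root of $f_c$, hence $\Phi_d \mid f_c$. This shows the product of all cyclotomic factors of $f$ equals $\prod_{\Phi_d \mid f} \Phi_d$, which divides $f_c$; combined with $f_c \mid f$ and the fact that $f_c$ is itself a product of distinct cyclotomics, we get that $f_c$ \emph{is} exactly this product.

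I expect the main obstacle to be the clean execution of the equality case of the triangle inequality for complex numbers when some coefficients $a_{n_i}$ are negative: one must be careful to phrase the conclusion as $\zeta^{n_i} = -\sgn(a_0 a_{n_i})$ uniformly, handling the signs of the $a_{n_i}$ and the sign of $a_0$ together, and to note that a term with $a_{n_i} = 0$ is simply absent (the hypothesis implicitly has all displayed $a_{n_i} \ne 0$, but this should be remarked). A secondary technical point is justifying that a gcd of binomials $x^{n_i} + \varepsilon_i$ with $\varepsilon_i \in \{\pm 1\}$ is again a product of distinct cyclotomic polynomials — this is immediate since $x^{n}-1 = \prod_{d\mid n}\Phi_d(x)$ and $x^{n}+1 = (x^{2n}-1)/(x^n-1) = \prod_{d \mid 2n,\, d\nmid n}\Phi_d(x)$, so every binomial in sight, and hence their gcd, is squarefree with only cyclotomic irreducible factors. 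Once these two points are pinned down, the rest is bookkeeping.
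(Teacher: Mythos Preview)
Your argument for the root-set equality---via the equality case of the triangle inequality, yielding $\zeta^{n_i}=-\sgn(a_0a_{n_i})$ for every $i$---is exactly the paper's approach, and your deduction that $f_c\mid f$ by assembling the coprime factors $\Phi_d$ is fine. The gap is in the final step: you assert that ``the product of all cyclotomic factors of $f$ equals $\prod_{\Phi_d\mid f}\Phi_d$'', but this presupposes that no $\Phi_d$ occurs in $f$ with multiplicity at least $2$. Without ruling that out you have only identified $f_c$ with the \emph{radical} of the cyclotomic part of $f$, not with the cyclotomic part itself; and the proposition, as used later in the decomposition $f=f_c f_n$ with $f_n$ irreducible, requires the latter reading.

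The paper closes this gap with a one-line derivative check that you mention in passing but then abandon: if $\zeta$ is a root of unity with $f(\zeta)=0$, the relations $a_{n_i}\zeta^{n_i}=-\sgn(a_0)\,|a_{n_i}|$ give
\[
\zeta f'(\zeta)=\sum_{i=1}^{r} n_i\,a_{n_i}\zeta^{n_i}=-\sgn(a_0)\sum_{i=1}^{r} n_i\,|a_{n_i}|\neq 0,
\]
so every cyclotomic root of $f$ is simple. Adding this sentence to your argument completes the proof along the same lines as the paper.
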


 The polynomial $f(x)=x^4+x^2-8$ is irreducible, but $f_c(x)=\gcd(x^4-1,x^2-1)=x^2-1$ does not divided $f(x).$  Let $g(x)=x^4+(k+1)x^3+x^2-k,$ where $k\ge 1.$ Then $g(x)=\Phi_3(x)h(x)$ for some $h(x)\in \Z[x]$ where as $g_c(x)=\gcd(x^4-1, x^3-1, x^2-1)=x-1$  and $g(1)\ne 0$. Thus, Proposition~\ref{pro:fc(x)} is no longer true apart from the equality condition on the coefficients. One can conclude that for any $f(x)=a_{n_r}x^{n_r}+\cdots+a_{n_1}x^{n_1}+a_0\in \Z[x],$ the followings hold: 
 \begin{enumerate}
  \item if $|a_0|>|a_{n_1}|+\cdots+|a_{n_r}|$, then from the Remark~\ref{rem:non-reciprocal}, $f(x)$ is not divisible by a   cyclotomic polynomial.
  \item if $|a_0|=|a_{n_1}|+\cdots+|a_{n_r}|$, then from the Proposition~\ref{pro:fc(x)},  $f(x)$ has cyclotomic factors if and only if $f_c(x)\ne 1.$
 \end{enumerate}

Let $n$ be a positive integer. We denote $e(n)$ as the largest even part of $n$, that is if $n=2^an_1$ with $n_1$ being odd, then $e(n)=2^a$.  Under some special restrictions on the exponents of $x$ in  $f(x)$, Theorem~\ref{cyclothm} provides various useful irreducibility criterion  for polynomials of this nature. For example, 
\begin{cor}\label{cor:pos}
Suppose $f(x)=a_{n_r}x^{n_r}+a_{n_{r-1}}x^{n_{r-1}}+\cdots+a_{n_1}x^{n_1}+p^u\ep\in (\S_1\cup \S_2'\cup \S_3')\cap \Z_+[x]$ is a polynomial and $a_{n_1}+a_{n_2}+\dots+a_{n_{r-1}}+a_{n_r}=p^u$.  Then $f(x)$ is irreducible if and only if there exist distinct $i,j$ such that $e(n_i)\ne e(n_j).$  
\end{cor}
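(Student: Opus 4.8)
The plan is to combine Theorem~\ref{cyclothm} with a careful analysis of the cyclotomic factor $f_c(x)$ in the positive-coefficient setting. Since $f\in(\S_1\cup\S_2'\cup\S_3')\cap\Z_+[x]$ and $a_{n_1}+\cdots+a_{n_r}=p^u$, Theorem~\ref{cyclothm} tells us precisely when $f$ is reducible: namely $f$ is reducible if and only if $f_c(x)\neq 1$, and in that case $f(x)=f_c(x)f_n(x)$ with $f_n$ the unique irreducible non-reciprocal factor. (We also need $\ep=+1$ here: as all $a_{n_i}>0$ and $p^u>0$, the constant term $p^u\ep$ forces $\ep=1$ in order for $f\in\Z_+[x]$; but strictly we only need the sign bookkeeping below.) So the entire corollary reduces to showing: $f_c(x)=1$ if and only if there exist distinct $i,j$ with $e(n_i)\neq e(n_j)$.

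Because every $a_{n_i}>0$ and $\ep=1$, we have $\sgn(a_{n_i}\ep)=+1$ for all $i$, hence by Theorem~\ref{cyclothm},
\[
f_c(x)=\gcd\bigl(x^{n_1}+1,\,x^{n_2}+1,\,\ldots,\,x^{n_r}+1\bigr).
\]
So the task is purely a gcd computation for the polynomials $x^{n}+1$. The key elementary fact I would establish (or cite) is: $\gcd(x^m+1,\,x^n+1)=x^{\gcd(m,n)}+1$ if $e(m)=e(n)$, and equals $1$ otherwise. This follows from the standard factorization $x^n+1=\prod_{d\mid 2n,\ d\nmid n}\Phi_d(x)$: a primitive $d$-th root of unity is a root of $x^n+1$ exactly when $d\mid 2n$ but $d\nmid n$, i.e. exactly when $e(d)=2\,e(n)$; so $\gcd(x^m+1,x^n+1)$ collects the $\Phi_d$ with $e(d)=2e(m)=2e(n)$ and $d\mid 2\gcd(m,n)$, which is nonempty precisely when $e(m)=e(n)$. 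Iterating the two-argument identity across $n_1,\ldots,n_r$, I get that $f_c(x)=x^{\gcd(n_1,\ldots,n_r)}+1$ when $e(n_1)=e(n_2)=\cdots=e(n_r)$ (a common value that is always $\neq 1$ since $\deg\geq 1$), and $f_c(x)=1$ as soon as two of the $e(n_i)$ differ.

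Putting the pieces together: if all $e(n_i)$ are equal then $f_c(x)=x^{\gcd(n_1,\ldots,n_r)}+1\neq 1$, so by Theorem~\ref{cyclothm} $f$ is reducible; conversely if some $e(n_i)\neq e(n_j)$ then $f_c(x)=1$, so Theorem~\ref{cyclothm} (in the form: reducible $\Rightarrow f_c\neq 1$, equivalently $f_c=1\Rightarrow$ irreducible) gives that $f$ is irreducible. This is exactly the claimed equivalence. The main obstacle I anticipate is not conceptual but bookkeeping: pinning down the gcd identity for $x^m+1$ and $x^n+1$ cleanly via the largest-even-part function $e(\cdot)$, and making sure the multi-argument gcd collapses correctly — in particular verifying that when the $e(n_i)$ all agree the surviving cyclotomic polynomials assemble to exactly $x^{\gcd(n_1,\ldots,n_r)}+1$ rather than some proper divisor of it. One should also double-check the degenerate subcases ($r=1$, or some $n_i=1$ so that $e(n_i)=1$) to confirm the statement still reads correctly there.
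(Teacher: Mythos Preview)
Your proposal is correct and follows essentially the same route as the paper: reduce via Theorem~\ref{cyclothm} to deciding whether $f_c(x)=\gcd(x^{n_1}+1,\ldots,x^{n_r}+1)$ is trivial, then settle this with the pairwise identity $\gcd(x^m+1,x^n+1)=x^{(m,n)}+1$ or $1$ according as $e(m)=e(n)$ or not, which is exactly Lemma~\ref{lem:basic_cyclo}. The paper's own proof is terser---it first reduces to the case of coprime exponents via $g(x)\mapsto g(x^d)$ and then simply cites Theorem~\ref{cyclothm} and Lemma~\ref{lem:basic_cyclo}---whereas you carry out the multi-argument gcd directly, but the substance is the same.
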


 Few applications of these results in the case of trinomials are shown in section~\ref{sec:app}.

\section{Proofs}\label{sec:proofs}
Suppose $n,m$ are two positive integers. It is known that $(x^n-1,x^m-1)=x^{(n,m)}-1$. We will use the following lemma later in the paper to draw several consequences of Theorem \ref{mainthm} and Theorem \ref{cyclothm}. See \cite{bksr} for the detailed proof.
\begin{lemma}\label{lem:basic_cyclo}
Suppose $n,m$ are two positive integers. Then $$ (x^n+1, x^m+1)=\begin{cases}
x^{(n,m)}+1 &\mbox{ if $e(m)=e(n);$}\\
1 &\mbox{ otherwise,}
\end{cases}$$ and 
$$(x^n+1, x^m-1)=\begin{cases} x^{(n,m/2)}+1 &\mbox{ if $e(m)\ge 2e(n);$}\\
1 &\mbox{ otherwise.}
\end{cases}$$
\end{lemma}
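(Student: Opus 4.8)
The plan is to reduce both identities to a count of common roots in $\C$. First I would note that $x^n+1$ divides $x^{2n}-1$ and that $x^{2n}-1$ and $x^m-1$ are separable; hence $x^n+1$, $x^m+1$ and $x^m-1$ all have simple roots, and for monic polynomials with simple roots the gcd is exactly $\prod(x-\zeta)$ over the common roots $\zeta\in\C$. So it suffices, in each of the two assertions, to determine the set of $\zeta\in\C$ satisfying the two relevant equations, and then compare with the roots of the claimed answer. Throughout I would use the fact (quoted just before the lemma) that $\gcd(x^a-1,x^b-1)=x^{\gcd(a,b)}-1$, together with the elementary observations that $x^d+1\mid x^N+1$ iff $N/d$ is odd.

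For $\gcd(x^n+1,x^m+1)$: a common root $\zeta$ satisfies $\zeta^n=\zeta^m=-1$, hence $\zeta^{2n}=\zeta^{2m}=1$, so $\operatorname{ord}(\zeta)\mid 2\gcd(n,m)$; moreover $\zeta^n=-1$ forces $v_2(\operatorname{ord}(\zeta))=v_2(n)+1$ and likewise $v_2(\operatorname{ord}(\zeta))=v_2(m)+1$, so a common root can exist only when $v_2(n)=v_2(m)$, i.e.\ $e(n)=e(m)$; this settles the ``otherwise'' branch. When $e(n)=e(m)$, set $d=\gcd(n,m)$, so $v_2(d)=v_2(n)=v_2(m)$ and both $n/d$, $m/d$ are odd. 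Then $\zeta^d=-1$ gives $\zeta^n=(\zeta^d)^{n/d}=-1$ and similarly $\zeta^m=-1$; conversely a common root $\zeta$ has $\zeta^{2d}=1$, hence $\zeta^d=\pm1$, and $\zeta^d=1$ would give $\zeta^n=1\neq-1$, forcing $\zeta^d=-1$. Thus the common roots are exactly the roots of $x^d+1$, and $\gcd(x^n+1,x^m+1)=x^{\gcd(n,m)}+1$.

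For $\gcd(x^n+1,x^m-1)$: a common root satisfies $\zeta^n=-1$, $\zeta^m=1$, so $v_2(\operatorname{ord}(\zeta))=v_2(n)+1$ while $\operatorname{ord}(\zeta)\mid m$ gives $v_2(n)+1\le v_2(m)$, i.e.\ $e(m)\ge 2e(n)$; if this fails (in particular if $m$ is odd, in which case $m/2$ is not an integer but the ``otherwise'' branch applies) there are no common roots and the gcd is $1$. Assuming $e(m)\ge 2e(n)$, put $k=\gcd(n,m/2)$; from $v_2(m/2)=v_2(m)-1\ge v_2(n)$ one gets $v_2(k)=v_2(n)$, so $n/k$ is odd and $m/k$ is even. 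Then $\zeta^k=-1$ gives $\zeta^n=(-1)^{n/k}=-1$ and $\zeta^m=(-1)^{m/k}=1$; conversely, using that $\gcd(2n,m)=2k$ in this range, a common root has $\zeta^{2k}=1$, hence $\zeta^k=\pm1$, and $\zeta^k=1$ is impossible as before, so $\zeta^k=-1$. Hence the common roots are exactly the roots of $x^k+1$, and $\gcd(x^n+1,x^m-1)=x^{\gcd(n,m/2)}+1$.

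The routine-but-delicate part, and where I expect the most care to be needed, is the $2$-adic bookkeeping: checking that $v_2(n/\gcd(n,m))=0$ is equivalent to $e(n)=e(m)$, that $v_2(\gcd(n,m/2))=v_2(n)$ precisely when $e(m)\ge 2e(n)$, and that $\gcd(2n,m)=2\gcd(n,m/2)$ in that range (compare $2$-adic valuations and odd parts separately). None of these is difficult, but a stray exponent is easy to introduce, so I would isolate them as a short preliminary arithmetic sublemma and then combine it with separability to conclude.
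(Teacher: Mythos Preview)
Your argument is correct. The paper itself does not prove this lemma at all: it merely states the result and writes ``See \cite{bksr} for the detailed proof,'' so there is no in-paper proof to compare against. Your approach---reducing to the set of common complex roots via separability, then doing the $2$-adic bookkeeping on $\operatorname{ord}(\zeta)$---is a clean, self-contained proof. The key observations you isolate (that $\zeta^n=-1$ forces $v_2(\operatorname{ord}\zeta)=v_2(n)+1$, that $v_2(\gcd(n,m))=v_2(n)$ exactly when $e(n)=e(m)$, and that $\gcd(2n,m)=2\gcd(n,m/2)$ when $e(m)\ge 2e(n)$) are all correct and suffice. The only cosmetic suggestion is that your ``preliminary arithmetic sublemma'' is short enough that you could simply inline the three $2$-adic checks rather than flag them as a separate step; each is a one-line comparison of $v_2$ on the two sides.
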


 Since it is known that Theorem~\ref{mainthm} is true when $f\in \S_1\cup \S_2'$, we prioritize the irreducibility of $f\in \S_3'$. Theorem~\ref{mainlem} will  provide an alternate proof for the irreducibility of $f\in \S_1\cup \S_2'$ by the approach followed in the proof of Lemma~\ref{lem1}.
 
 \begin{rem} 
  If $f\in \S_1\cup \S_2\cup\S_3$, then either $f(0)=p^u$ or 
  $f(0)=-p^u.$ Since irreducible factors of $f(x)$ and $-f(x)$ are same upto sign, without loss of generality we will assume at least one of the irreducible factors of $f(x)$ has a positive constant term.
 \end{rem}

\begin{lemma}\label{lem1}
Let $f(x)=a_{n_r}x^{n_r}+a_{n_{r-1}}x^{n_{r-1}}+\cdots+a_{n_1}x^{n_1}+p^u\ep\in \S_3'$ be reducible. Then the constant term of the  one of the irreducible factors of $f(x)$ is $|f(0)|.$
\end{lemma}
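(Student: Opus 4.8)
The plan is to work with the prime $p$ and analyze the possible constant terms of the irreducible factors of $f$. Write $f = g h$ with $g, h \in \Z[x]$ non-constant. Since $|f(0)| = p^u$, the constant terms $g(0)$ and $h(0)$ are (up to sign) powers of $p$, say $|g(0)| = p^a$ and $|h(0)| = p^b$ with $a + b = u$ and $a, b \ge 0$. The claim is equivalent to showing that one of $a, b$ is $0$, i.e. that we cannot have both $a \ge 1$ and $b \ge 1$. So I would argue by contradiction: assume $a \ge 1$ and $b \ge 1$, hence $1 \le a \le u-1$.

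The key tool is the location of the roots. By Remark~\ref{rem:non-reciprocal}, every root of $f$ lies strictly outside the unit circle (the hypothesis $p^u > |a_{n_1}| + \cdots + |a_{n_r}|$ gives exactly the needed inequality). If $\deg g = d$ and the leading coefficient of $g$ divides $a_{n_r}$, then $|g(0)|$ is the absolute value of the leading coefficient of $g$ times the product of the absolute values of the roots of $g$; since each such root has modulus $> 1$, we get $p^a = |g(0)| > |\text{lead}(g)| \ge 1$, which is consistent, so this crude bound alone is not enough — I need to exploit the gap structure ($n_1 \in \{1,2,3\}$ is the smallest exponent and $u \not\equiv 0 \pmod{n_1}$) together with $p \nmid a_{n_1} a_{n_r}$. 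The mechanism, following Ljunggren's and the Bonciocats' approach, is to reduce modulo $p$: since $p \mid g(0)$ and $p \mid h(0)$, reducing $f \equiv \bar g \bar h \pmod p$ forces $\bar g(0) = \bar h(0) = 0$, so $x \mid \bar g$ and $x \mid \bar h$ in $\F_p[x]$, hence $x^2 \mid \bar f = \overline{a_{n_r}} x^{n_r} + \cdots + \overline{a_{n_1}} x^{n_1} + 0$. Because $p \nmid a_{n_1}$, the lowest-degree term of $\bar f$ is $\overline{a_{n_1}} x^{n_1}$ with nonzero coefficient, so $x^2 \mid \bar f$ is possible only via $n_1 \ge 2$; this already handles $n_1 = 1$. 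For $n_1 \in \{2,3\}$, I would push the $p$-adic analysis further: compare the $p$-adic valuations of the coefficients of $g$ and $h$ at low degrees using the Newton-polygon / Dumas-type argument. The condition $u \not\equiv 0 \pmod{n_1}$ is precisely what prevents the Newton polygon of $f$ (with respect to $p$) from splitting into two integer-slope segments, which is what a genuine factorization with $a, b \ge 1$ would require.

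Concretely, I expect the argument to run: the Newton polygon of $f$ at $p$ has vertices at $(0, u)$ (from $p^u \ep$, since $v_p(p^u\ep) = u$) and at $(n_1, 0)$ (since $v_p(a_{n_1}) = 0$), with all other points $(n_i, v_p(a_{n_i}))$ on or above this segment — and in fact $(n_r, 0)$ forces the polygon's relevant lower hull on $[0, n_1]$ to be the single segment from $(0,u)$ to $(n_1, 0)$ of slope $-u/n_1$. A factorization $f = gh$ induces a decomposition of this segment into the Newton polygons of $g$ and $h$; if both $g(0)$ and $h(0)$ are divisible by $p$, both contribute a sub-segment of strictly negative slope, and all these slopes must equal $-u/n_1$ (since the hull is a single segment). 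Hence the breakpoints are lattice points on the line of slope $-u/n_1$ through $(0,u)$, forcing $n_1 \mid u$ — contradicting $u \not\equiv 0 \pmod{n_1}$. Therefore one of $g(0), h(0)$ is a unit times... no: rather, one of $a, b$ must be $0$, so one factor has constant term $\pm 1$ or $\pm p^u$; combined with the previous paragraph's reduction mod $p$ (which rules out the "$\pm 1$" possibility when the other factor is nonconstant, via root-modulus bounds applied to the factor with constant term $\pm 1$: that factor would have all roots outside the unit circle yet product of roots of modulus $\le 1$, impossible unless it is constant), we conclude the constant term of one irreducible factor is $\pm p^u = |f(0)|$.

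The main obstacle I anticipate is the case $n_1 = 3$ (and to a lesser extent $n_1 = 2$): the Newton-polygon argument is clean only if I am careful that the \emph{lower} convex hull on the interval $[0, n_1]$ really is the single segment $(0,u)$–$(n_1,0)$, which uses both $v_p(a_{n_1}) = 0$ and the absence of intermediate exponents below $n_1$ (true since $n_1$ is the smallest exponent) — that part is automatic. The subtler point is translating "both $g(0)$ and $h(0)$ divisible by $p$" into "both Newton polygons contribute negative-slope segments summing to the slope-$(-u/n_1)$ segment," and then extracting the divisibility $n_1 \mid u$; this is exactly where $\S_3'$ (as opposed to all of $\S_3$) is needed, and where the counterexamples in $\S_2 \setminus \S_2'$, $\S_3 \setminus \S_3'$ listed in the introduction show the hypothesis cannot be dropped. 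I would also double-check the boundary bookkeeping when $\deg g$ or $\deg h$ is small relative to $n_1$, and handle the sign/leading-coefficient normalization via the Remark preceding the lemma.
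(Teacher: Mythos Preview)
Your core Newton polygon / Dumas argument is correct and takes a genuinely different route from the paper. The paper does not use Newton polygons; instead it follows Ljunggren's method: writing $f=f_1f_2$, it forms $g=f_1\tilde f_2$, uses the identity $g(x)\tilde g(x)=f(x)\tilde f(x)$, and then carries out a multi-page case analysis on the exponents appearing in $g$ to show that the coefficient of $x^{2n_r-3}$ in $g\tilde g$ would be divisible by $p$ (whenever $1\le\alpha\le u/2$) while the matching coefficient of $f\tilde f$ involves $a_{n_1}a_{n_r}$ and is not. Your argument replaces all of this with the single observation that the first edge of the $p$-adic Newton polygon of $f$ runs from $(0,u)$ to $(3,0)$ and, since $\gcd(u,3)=1$, contains no interior lattice point and hence cannot be split between two factors. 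This is considerably shorter and more conceptual, and in fact recovers the side information used for Corollary~\ref{cor:n_1=2} just as easily.

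Two things to clean up. First, the inequality $p^u>|a_{n_1}|+\cdots+|a_{n_r}|$ is \emph{not} a hypothesis of this lemma (it enters only in Theorem~\ref{mainthm}), so your appeals to Remark~\ref{rem:non-reciprocal} and to root-modulus bounds do not belong here; fortunately the Newton-polygon step does not use that inequality, so simply drop those passages. Second, your final paragraph is both unnecessary and confused: you do \emph{not} want to ``rule out the $\pm 1$ possibility.'' Once Dumas shows that in any splitting $f=gh$ one of $|g(0)|,|h(0)|$ equals $p^u$, apply this with $g$ an arbitrary irreducible factor of $f$; it follows that every irreducible factor has constant term in $\{\pm 1,\pm p^u\}$, and since their product is $\pm p^u$, exactly one of them has constant term $\pm p^u$. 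That is the lemma. What you sketched in that last paragraph---using root locations to exclude nonconstant factors with unit constant term---is really the proof of Theorem~\ref{mainthm}, not of the present lemma, and it again leans on the hypothesis you do not have.
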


\begin{proof}
Suppose $f(x)=f_1(x)f_2(x)$ is a non trivial factorization of $f(x)$ with $\deg(f_1)=s$.  Let $g(x)=f_1(x)\tilde{f}_2(x)=\sum\limits_{i=0}^{n_r} b_ix^i.$
Then $\tilde{g}(x)=\sum\limits_{i=0}^n b_{n_r-i}x^i$. Since $g(x)\tilde{g}(x)=f(x)\tilde{f}(x)$, comparing the leading coefficient and the coefficient of $x^{n_r}$, we get
\begin{equation*}
b_0b_{n_r}=p^ua_{n_r}\ep; \qquad \sum\limits_{i=0}^{n_r} b_i^2=p^{2u}+\sum\limits_{i=1}^{r}a_{n_i}^2,
\end{equation*}
respectively. Let $b_0=p^{\alpha}d$ and $b_{n_r}=p^{u-\alpha}d_1,$ where $dd_1=a_{n_r}\ep$ and $\alpha\ge 0$.  Then the  equation $\sum\limits_{i=0}^{n_r} b_i^2=p^{2u}+\sum\limits_{i=1}^{r}a_{n_i}^2$ can be written as
\begin{equation*}
\sum\limits_{i=1}^{n_r-1}b_i^2=p^{2u}-p^{2\alpha}d^2-p^{2(u-\alpha)}d_1^2+\sum\limits_{i=1}^{r}a_{n_i}^2.
\end{equation*}
Suppose that $b_i$ is nonzero whenever $i\in\{0,j_1,j_2,\ldots,j_t,n_r\},$ where    $0<j_t<j_{t-1}<\cdots<j_1<n_r.$ Then $g(x)=b_{n_r}x^{n_r}+b_{j_1}x^{j_1}+\cdots+b_{j_t}x^{j_t}+b_0$ and 
\begin{equation}\label{eq1}
g(x)\tilde{g}(x)=p^ua_{n_r}\ep x^{2n_r}+b_{n_r}b_{j_t}x^{2n_r-j_t}+b_0b_{j_1}x^{n_r+j_1}+\cdots+p^ua_{n_r}\ep.
\end{equation}

Our goal is to show that $\alpha=0$. On the contrary, we assume that $1\le \alpha\le u/2$.

First we will show that $1\le \alpha\le u/2$ is not possible if $n_r\ge n_1+n_{r-1}=3+n_{r-1}.$ Let $n_r\ge 3+n_{r-1}.$  Then the term with  second largest exponent of $x$ in 
\begin{equation}\label{eq2}
f(x)\tilde{f}(x)=p^ua_{n_r}\ep x^{2n_r}+a_{n_r}a_{n_1} x^{2n_r-3}+p^u\ep a_{n_{r-1}}x^{n_r+n_{r-1}}+\cdots+p^ua_{n_r}\ep,
\end{equation}
is either $a_{n_r}a_{n_1}x^{2n_r-3}$ or $a_{n_r}a_{n_1}x^{2n_r-3}+p^u\ep a_{n_{r-1}} x^{n_r+n_{r-1}}$. Because of the condition   $p\nmid a_{n_1}a_{n_r}$ in the  definition of $\S_{n_1},$  the coefficient of the second largest exponent of $x$ in Equation~\eqref{eq2} is not divisible by $p.$ Therefore, if we are  able to show that the corresponding coefficient in Equation~\eqref{eq1} is always divisible by $p,$ then we arrive at a contradiction which in turn implies that the assumption $1\le \alpha\le u/2$ is not correct, and hence $\alpha$ has to be zero. 
So we aim to find out the coefficient of the second largest exponent of $x$ in Equation~\eqref{eq1} and will show that it is divisible by $p.$ That coefficient depends on $j_t$ and $j_1$ and the possible cases for $j_t$ and $j_1$ are as follows.
\begin{multicols}{3}
\begin{enumerate}
 \item  $j_t=3$ or $j_1=n_r-3$
 \item  $j_t>3$ and $j_1<n_r-3$
 \item  $j_t>3$ and $j_1>n_r-3$
 \item  $j_t<3$ and $j_1<n_r-3$
 \item  $j_t<3$ and $j_1>n_r-3.$
\end{enumerate}
\end{multicols}

 If $j_t=3$ or $j_1=n_r-3,$ then we are through as $p|b_0$ and $p|b_{n_r}$.  If $j_t>3, j_1<n_r-3$, then for every $i$
 $$2n_r-j_i\le 2n_r-j_t<2n_r-3,$$ and  for every $i\ne l$ $$n_r+j_i-j_l<n_r+j_i<2n_r-3.$$ Hence the second largest exponent in $g(x)\tilde{g}(x)$ is less than $2n_r-3$ implies that the case $j_t>3$ and $j_1<n_r-3$ cannot arise.

Let $j_t>3$ and $j_1>n_r-3.$ Then 
$$2n_r-j_i\le 2n_r-j_t<2n_r-3$$
for every $i$ and $j_1>n_r-3$ implies either $j_1=n_r-1$ or $j_1=n_r-2.$ If $j_1=n_r-1$, then $x^{2n_r-1}$ has coefficient $b_0b_{j_1}(\ne 0)$ in $g(x)\tilde{g}(x)$ while the term is absent in $f(x)\tilde{f}(x).$ Similar case arise when $j_1=n_r-2.$

With little work in the similar manner, one can show that the case $j_t<3$ and $j_1<n_r-3$ is also not possible. 

Let $j_t< 3$ and $j_1> n_r-3$. There are two possibilities: either $j_t=1, j_1>n_r-3$ or $j_t=2, j_1>n_r-3$. We consider both the cases separately. 

{\em Case I:} Let $j_t=1$ and $j_1>n_r-3$. If $j_1=n_r-2,$ then $x^{2n_r-1}$ has coefficient $b_{n_r}b_{j_t}$ in Equation \eqref{eq1} while $x^{2n_r-1}$ is absent in Equation \eqref{eq2}. So, $j_1$ has to be $n_r-1$ and $b_{n_r}b_{j_t}+b_0b_{j_1}=0$. By using the values of $b_0$ and $b_{n_r}$, we deduce that
\begin{equation}\label{neq1}
b_{j_1}=-\frac{p^{u-2\alpha}d_1b_{j_t}}{d},
\end{equation} 
and hence $p^{u-2\alpha}|b_{j_1}$. Similar to the values of $j_1, j_t,$ we now consider the different possible values of $j_2$ and $j_{t-1}$. Note that it is not possible to hold $j_{t-1}>3, j_2<n_r-3$ simultaneously. Otherwise $g(x)\tilde{g}(x)$ has second largest exponent $<2n_r-3$. 

Let $j_{t-1}=3$ or $j_2=n_r-3$. Then the coefficient of $x^{2n_r-3}$ in Equation \eqref{eq1} is 
\begin{equation*}
\begin{cases}
b_{n_r}b_{j_{t-1}}+b_{0}b_{j_2} &\mbox{ if $j_{t-1}=3, j_2=n_r-3$;}\\
b_{n_r}b_{j_{t-1}} &\mbox{ if $j_{t-1}=3, j_2\ne n_r-3$;}\\
b_0b_{j_2} &\mbox{ if $j_{t-1}\ne 3, j_2=n_r-3$,}
\end{cases}
\end{equation*}
each of them is divisible by $p$. 
 
Let $j_{t-1}< 3$ and $j_2<n_r-3$. Since $j_{t-1}=2$, the coefficient of $x^{2n_r-3}$ in Equation \eqref{eq1} is 
\begin{equation*}
\begin{cases}
b_{j_1}b_{j_{t-1}}+b_{n_r}b_{j_{t-2}} & \mbox{ if $j_{t-2}=3$;}\\
b_{j_1}b_{j_{t-1}} &\mbox{ otherwise.}
\end{cases}
\end{equation*}
From Equation \eqref{neq1}, $p$ will divide the above coefficient provided $u\ne 2\alpha$. 

If $u=2\alpha$, then Equation \eqref{neq1} reduces to
\begin{equation}\label{neq2}
b_{j_1}=-\frac{d_1b_{j_t}}{d}.
\end{equation}
Since $j_t=1, j_1=n_r-1, j_{t-1}=2, j_2<n_r-3$, the coefficient of $x^{2n_r-2}$ in $g(x)\tilde{g}(x)$ is $b_{j_1}b_{j_t}+b_{j_{t-1}}b_{n_r}=0$. As $p|b_{n_r},$ using Equation \eqref{neq2}, $p|b_{j_t}$, which in turn implies that $p|b_{j_1}$. Thus, if $u=2\alpha$, then also $p$ divides the coefficient of $x^{2n_r-3}$ in $g(x)\tilde{g}(x)$. 

Let $j_{t-1}>3$ and $j_2>n_r-3$. As $j_2=n_r-2$, the coefficient of $x^{2n_r-2}$ in $g(x)\tilde{g}(x)$ is $$b_0b_{j_2}+b_{j_1}b_{j_t}=0.$$ If $u\ne 3\alpha$, then using \eqref{neq1} in the last equation,  either $p|b_{j_2}$ or $p|b_{j_t}$. The coefficient of $x^{2n_r-3}$ in \eqref{eq1} is then
\begin{equation*}
\begin{cases}
b_{j_2}b_{j_t}+b_{j_3}b_0 & \mbox{ if $j_3=n_r-3;$}\\
b_{j_2}b_{j_t} &\mbox{ otherwise,}
\end{cases}
\end{equation*} 
each of which is divisible by $p$.

Let $j_{t-1}< 3$ and $j_2> n_r-3$. Then $j_{t-1}=2, j_2=n_r-2$ and the coefficient of $x^{2n_r-2}$ in \eqref{eq1} is 
\begin{equation}\label{chap6:eq5}
b_{n_r}b_{j_{t-1}}+b_{j_1}b_{j_t}+b_0b_{j_2}=0.
\end{equation}
On the other hand, the coefficient of $x^{2n_r-3}$ in \eqref{eq1} is 
\begin{equation*}
\begin{cases}
b_{j_1}b_{j_{t-1}}+b_{j_2}b_{j_t}+b_{n_r}b_{j_{t-2}}+b_0b_{j_3} &\mbox{ if $j_{t-2}= 3, j_3=n_r-3$;}\\ 
b_{j_1}b_{j_{t-1}}+b_{j_2}b_{j_t}+b_{n_r}b_{j_{t-2}} &\mbox{ if $j_{t-2}=3, j_3\ne n_r-3$;}\\
b_{j_1}b_{j_{t-1}}+b_{j_2}b_{j_t}+b_0b_{j_3} &\mbox{ if $j_{t-2}\ne 3, j_3=n_r-3$;}\\
b_{j_1}b_{j_{t-1}}+b_{j_2}b_{j_t} &\mbox{ if $j_{t-2}\ne 3, j_3\ne n_r-3$.}
\end{cases}
\end{equation*}
Let $u=2\alpha$. By using \eqref{neq2} and \eqref{chap6:eq5}, we get
\begin{equation*}
p^{\alpha}d_1b_{j_{t-1}}-\frac{d_1b_{j_t}^2}{d}+p^{\alpha}db_{j_2}=0.
\end{equation*}
From the last equation, $p|b_{j_t}$ and hence $p|b_{j_1}$ by \eqref{neq2}. This implies that the coefficient of $x^{2n_r-3}$ in \eqref{eq1} is divisible by $p$. 

Let $u>2\alpha$ and $u\ne 3\alpha$. By using \eqref{neq1}, Equation \eqref{chap6:eq5} reduces to 
\begin{equation*}
p^{u-\alpha}d_1b_{j_{t-1}}-\frac{p^{u-2\alpha}d_1b_{j_t}}{d}+p^{\alpha}db_{j_2}=0.
\end{equation*}
If $u<3\alpha,$ then $p$ would divide $b_{j_t}$ and $p$ already divides $b_{j_1}$ by \eqref{neq1}. If $u>3\alpha$, then $p|b_{j_2}$. Hence, in this particular case also, the coefficient of $x^{2n_r-3}$ is divisible by $p$ in \eqref{eq1}.

{\em Case II:} Let $j_t=2$ and $j_1>n_r-3$. With a  similar analysis, it can be seen that either $j_{t-1}=3$ or $j_2=n_r-3$ or both has to be true. But in those cases, the corresponding coefficient is divisible by $p$ in $g(x)\tilde{g}(x)$. 

If $n_r=n_{r-1}+1$ or $n_r=n_{r-1}+2$, then instead of considering the second largest exponent in \eqref{eq1} and \eqref{eq2}, we will consider the coefficients of $x^{2n_r-3}$ in both the equations. With a similar analysis, one can show that the coefficient of $x^{2n_r-3}$ in Equation \eqref{eq1} is divisible by $p$ while it is not the case in Equation \eqref{eq2}. Therefore, $\alpha$ has to be zero. 
\end{proof}

The lemma is even true for polynomials belonging to $\S_1\cup \S_2'.$ In other words, 
 
\begin{theorem}\label{mainlem}
Let $f(x)=a_{n_r}x^{n_r}+a_{n_{r-1}}x^{n_{r-1}}+\cdots+a_{n_1}x^{n_1}+p^u\ep\in \S_1\cup \S_2'\cup \S_3'$ be reducible. Then the constant term of the one of the irreducible factors of $f(x)$ is $|f(0)|$. 
\end{theorem}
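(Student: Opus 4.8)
**The plan is to reduce Theorem~\ref{mainlem} to Lemma~\ref{lem1} by showing that the argument of that lemma goes through verbatim for $f\in\S_1\cup\S_2'$.** Recall the structure of the proof of Lemma~\ref{lem1}: starting from a nontrivial factorization $f=f_1f_2$ with $\deg f_1=s$, one forms $g(x)=f_1(x)\tilde f_2(x)$, writes $b_0=p^\alpha d$, $b_{n_r}=p^{u-\alpha}d_1$, and argues that $1\le\alpha\le u/2$ forces the coefficient of the second-largest-degree term of $g(x)\tilde g(x)$ to be divisible by $p$, whereas in $f(x)\tilde f(x)=p^ua_{n_r}\ep x^{2n_r}+a_{n_r}a_{n_1}x^{2n_r-n_1}+\cdots$ that coefficient equals $a_{n_r}a_{n_1}$ (or $a_{n_r}a_{n_1}x^{2n_r-n_1}+p^u\ep a_{n_{r-1}}x^{n_r+n_{r-1}}$ when $n_r$ is close to $n_{r-1}$), which is not divisible by $p$ because $p\nmid a_{n_1}a_{n_r}$. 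The only place where $n_1=3$ is used is in the bookkeeping of which exponents $j_t,j_{t-1},\dots$ and $j_1,j_2,\dots$ of $g$ can occur near $0$ and near $n_r$; for $n_1=1$ and $n_1=2$ the relevant case list is strictly shorter.

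First I would state the set-up identically: fix a nontrivial factorization, pass to $g=f_1\tilde f_2$, record $b_0b_{n_r}=p^ua_{n_r}\ep$ and $\sum b_i^2=p^{2u}+\sum a_{n_i}^2$, set $b_0=p^\alpha d$, $b_{n_r}=p^{u-\alpha}d_1$, assume for contradiction $1\le\alpha\le u/2$, and reduce to examining the coefficient of the second-highest-degree term in $g\tilde g$ versus $f\tilde f$. Then I would split into the cases $n_1=1$ and $n_1=2$ (the case $n_1=3$ being already done). For $n_1=1$: the second-highest term of $f\tilde f$ is $a_{n_r}a_{n_1}x^{2n_r-1}$, with coefficient coprime to $p$. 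Writing $g=b_{n_r}x^{n_r}+b_{j_1}x^{j_1}+\cdots+b_{j_t}x^{j_t}+b_0$, the coefficient of $x^{2n_r-1}$ in $g\tilde g$ is nonzero only if $j_t=1$ or $j_1=n_r-1$, and in either subcase it is $b_{n_r}b_1$ or $b_0b_{n_r-1}$, which is divisible by $p$ since $p\mid b_0$ and $p\mid b_{n_r}$ — a contradiction, so $\alpha=0$. For $n_1=2$: the second-highest term of $f\tilde f$ is $a_{n_r}a_{n_2}x^{2n_r-2}$ (or this plus $p^u\ep a_{n_{r-1}}x^{n_r+n_{r-1}}$ when $n_r-n_{r-1}\le 2$), again with leading-in-$x$ coefficient $a_{n_r}a_{n_2}$ coprime to $p$; for $x^{2n_r-2}$ to appear in $g\tilde g$ we need $j_t\in\{1,2\}$ or $j_1\in\{n_r-1,n_r-2\}$, and I would run exactly the short case analysis of Lemma~\ref{lem1}'s Case~I/Case~II restricted to these possibilities, using the auxiliary relations (the analogues of \eqref{neq1}, \eqref{neq2}) that come from forcing the spurious top coefficients of $g\tilde g$ to vanish, to conclude $p$ divides the coefficient of $x^{2n_r-2}$ in $g\tilde g$ — again a contradiction. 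In all cases $\alpha=0$, i.e.\ $p^u\mid b_0$, so $b_0=\pm p^u=\pm f(0)$, and since $b_0$ is the constant term of $f_1$ (because $\tilde f_2(0)$ is the leading coefficient $\pm1$-times-unit of $f_2$... more precisely $|b_0|=|f_1(0)|\cdot|{\rm lc}(f_2)|$ and $|f_1(0)|\cdot|f_2(0)|=p^u$ forces $|f_1(0)|=p^u$, $|f_2(0)|=1$), one of the irreducible factors of $f$ has constant term of absolute value $|f(0)|$.

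I would then remark that the small-degree corner cases — when $n_r=n_{r-1}+1$ or $n_r=n_{r-1}+2$, so that the "second-largest exponent'' machinery degenerates — are handled exactly as at the end of Lemma~\ref{lem1}: one instead compares the coefficient of $x^{2n_r-n_1}$ in \eqref{eq1}-type and \eqref{eq2}-type expansions, where $p$ divides it on the $g\tilde g$ side but not on the $f\tilde f$ side. **The main obstacle** I anticipate is purely organizational rather than conceptual: making sure the case enumeration for $n_1\in\{1,2\}$ is genuinely exhaustive and that every sub-branch really does land on a coefficient of the form $b_0\cdot(\text{something})$ or $b_{n_r}\cdot(\text{something})$ or one controlled by an \eqref{neq1}-style divisibility; this is lighter than in Lemma~\ref{lem1} since fewer $j$'s are in play, so I expect no new difficulty, only a need to verify that no case peculiar to $n_1=2$ (e.g.\ a coincidence $2n_r-2=n_r+n_{r-1}$) spoils the comparison — and that coincidence is exactly the $n_r=n_{r-1}+2$ corner case already accounted for. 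Finally, the passage from $|b_0|=p^u$ to "some irreducible factor of $f$ has constant term $|f(0)|$'' uses that $p$ is prime and $f(0)=\pm p^u$ together with the multiplicativity of constant terms over a full factorization into irreducibles, exactly as in the $\S_3'$ case.
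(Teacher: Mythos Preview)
Your approach matches the paper's: reuse the Lemma~\ref{lem1} machinery, compare the coefficient of $x^{2n_r-n_1}$ in $f\tilde f$ versus $g\tilde g$, and run a shorter case list for $n_1\in\{1,2\}$. Two points need fixing, however.

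First, for $n_1=2$ you cannot simply ``run Lemma~\ref{lem1}'s Case~I/Case~II verbatim''. In Lemma~\ref{lem1} the troublesome case $u=2\alpha$ is dispatched by descending one more level (to the coefficient of $x^{2n_r-3}$); for $n_1=2$ there is no further level to descend to, since $x^{2n_r-2}$ is already the target. The paper's actual argument is that $f\in\S_2'$ means $u$ is odd, so $u\ne2\alpha$ is automatic, and then the analogue of \eqref{neq1} immediately gives $p\mid b_{j_1}$. You should invoke $u\not\equiv0\pmod{n_1}$ explicitly here---it is the whole reason the theorem is stated for $\S_2'$ and $\S_3'$ rather than $\S_2$ and $\S_3$.

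Second, your closing sentence is inverted: $\alpha=0$ with $b_0=p^\alpha d$ means $p\nmid b_0$, not $p^u\mid b_0$. The correct deduction is that $p\nmid b_0=f_1(0)\cdot\mathrm{lc}(f_2)$ forces $p\nmid f_1(0)$; since $f_1(0)f_2(0)=\pm p^u$, this gives $|f_1(0)|=1$ and $|f_2(0)|=p^u=|f(0)|$, so it is the \emph{other} factor $f_2$ whose constant term has the required absolute value.
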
 

\begin{proof}
We use  the  same notations as  used in the proof of  Lemma \ref{lem1}. We have
\begin{equation}\label{eq4}
f(x)\tilde{f}(x)=p^ua_{n_r}\ep x^{2n_r}+a_{n_r}a_{n_1} x^{2n_r-n_1}+p^u\ep a_{n_{r-1}}x^{n_r+n_{r-1}}+\cdots+p^ua_{n_r}\ep,
\end{equation}
and 
\begin{equation}\label{eq5}
g(x)\tilde{g}(x)=p^ua_{n_r}\ep x^{2n_r}+b_{n_r}b_{j_t}x^{2n_r-j_t}+b_0b_{j_1}x^{n_r+j_1}+\cdots+p^ua_{n_r}\ep.
\end{equation}

It is sufficient to consider $n_1=1,2$. If $n_1=1$, then either $j_t=1$ or $j_1=n_r-1.$  The coefficient of $x^{2n_r-1}$ is then divisible by $p$ in \eqref{eq5} but not in \eqref{eq4}. 

Suppose $n_2=1$ and $n_r\ge 2+n_{r-1}$. If $j_t=2$ or $j_1=n_r-2,$  then the coefficient of $x^{2n_r-2}$ is divisible by $p$ in \eqref{eq5} but not in \eqref{eq4}. Since the term $x^{2n_r-1}$ is absent in \eqref{eq4}, we cannot have $j_t=1, j_1<n_r-1$ or $j_t>1, j_1=n_r-1$. Hence, $j_t=1, j_1=n_r-1$ and $b_{n_r}b_{j_t}+b_0b_{j_1}=0$. Since $u$ is odd, this would imply $p|b_{j_1}.$ Also $j_t<j_{t-1}$ and $j_2<j_1$ implies that either $j_{t-1}=2$ or $j_2=n_r-2.$ Then the coefficient of $x^{2n_r-2}$ in equation~\eqref{eq5} is
\begin{equation*}
\begin{cases}
b_{j_1}b_{j_t}+b_0b_{j_2}+b_{n_r}b_{j_{t-1}} & \mbox{ if $j_{t-1}=2, j_2=n_r-2$;}\\
b_{n_r}b_{j_{t-1}}+b_{j_1}b_{j_t} & \mbox{ if $j_{t-1}=2, j_2\ne n_r-2$;}\\
b_{j_1}b_{j_t}+b_0b_{j_2} &\mbox{ if $j_{t-1}\ne 2, j_2=n_r-2$;}\\
b_{j_1}b_{j_{t}} &\mbox{ if $j_{t-1}\ne 2, j_2\ne n_r-2$,}
\end{cases}
\end{equation*}
each divisible by $p$. Similarly if $n_r=n_{r-1}+1,$ then one can arrive at the same kind of contradiction by comparing the coefficient of $x^{2n_r-2}$ in Equation \eqref{eq4} and \eqref{eq5}. Hence $\alpha=0.$
\end{proof}

\begin{proof} [\unskip\nopunct]{\textbf{Proof of Theorem~\ref{mainthm}:}} Follows from Theorem~\ref{mainlem} and Remark~\ref{rem:non-reciprocal}.
\end{proof}

\begin{proof} [\unskip\nopunct]{\textbf{Proof of Corollary~\ref{cor:n_1=2}:}}
Let $f(x)\in \S_2$. From the proof of Theorem \ref{mainlem}, if $f(x)$ is reducible then $\alpha$ has to be either $u/2$ or $0$. If $f(x)\in \S_3$ is reducible, then from the proof of Lemma \ref{lem1}, $\alpha$ has to be either $u/3$ or $0$. Because of the hypothesis, in either case,  $\alpha$ can't be $0$, from which the result follows. 
\end{proof}

\begin{proof} [\unskip\nopunct]{\textbf{Proof of Theorem~\ref{cyclothm}:}}
Suppose $f(x)=f_1(x)f_2(x)$ is a proper factorization of $f(x)$.  By using Theorem~\ref{mainlem}, without loss of generality, we can  assume that  $|f_1(0)|=1$, $|f_2(0)|=p^u.$ As a consequence,  $f_2(x)$ is irreducible.

With a proof similar to that of Remark~\ref{rem:non-reciprocal} one can prove that  all the roots of $f(x)$ lies in the region $|z|\ge 1$. Let $z_1, z_2, \ldots, z_s$ be all the roots of $f_1(x)$, where $\deg(f_1)=s<\deg(f).$ Then  
\begin{equation*}
\prod\limits_{i=1}^s |z_i|=\frac{1}{|d|},
\end{equation*}
where $d$ is the leading coefficient of $f_1(x)$, dividing $a_{n_r}$. 
Since $z_i$'s are roots of $f(x)$, we have $|z_i|\ge 1$ and hence $|d|=1$.  Consequently, all the roots of $f_1(x)$ lies on the unit circle and by Kronecker's theorem $\pm f_1(x)$ becomes a product of cyclotomic polynomials.  

The second part is a special case of the proof of Proposition~\ref{pro:fc(x)}.
\end{proof}

\begin{proof}[\unskip\nopunct]{\textbf{Proof of Proposition~\ref{pro:fc(x)}:}}
 Let $\zeta$ be a primitive $t^{\text{th}}$ root of unity with $f(\zeta)=0$. Then 
\begin{equation}\label{eq7}
-a_0=a_{n_1}\zeta^{n_1}+a_{n_2}\zeta^{n_2}+\cdots+a_{n_r}\zeta^{n_r}.
\end{equation} 
Taking modulus on both sides
\begin{equation*}
|a_0|=|a_{n_1}\zeta^{n_1}+a_{n_2}\zeta^{n_2}+\cdots+a_{n_r}\zeta^{n_r}|=\sum\limits_{i=1}^r|a_{n_i}|.
\end{equation*}
From triangle inequality, the last two equations hold if and only if the ratio of any two parts is a positive real number. Therefore, $a_{n_r}\zeta^{n_r-n_i}/a_{n_i}=|a_{n_r}\zeta^{n_r-n_i}/a_{n_i}|$ gives $\zeta^{n_r-n_i}=\sgn(a_{n_r}a_{n_i})$ for $1\le i\le r-1$. From \eqref{eq7}, we have
\begin{equation*}
-a_0= a_{n_i}\zeta^{n_i}\left[ \left|\frac{a_{n_1}}{a_{n_i}}\right|+\dots+\left|\frac{a_{n_{i-1}}}{a_{n_i}}\right|+1+\left|\frac{a_{n_{i+1}}}{a_{n_i}}\right|+\dots+\left|\frac{a_{n_r}}{a_{n_i}}\right|\right],
\end{equation*}
so that $\zeta^{n_i}=-\sgn(a_0a_{n_i})$. From $\zeta^{n_r-n_i}\zeta^{n_i}$, one gets the last equation. Remaining all the equations satisfied by $\zeta$ can be drawn from these $r$ equations. Conversely, if $\zeta$ satisfy each of the $r$ equations $x^{n_i}+\sgn(a_0 a_{n_i})=0,$ then $f(\zeta)=0$. It remains to show the separability of the cyclotomic part of $f(x)$.  Let $\zeta$ be a roots of unity satisfying $x^{n_i}+\sgn(a_0 a_{n_i})=0$ for $1\le i\le r$ and $f(\zeta)=0, f'(\zeta)=0$. Using the $r$ relations satisfied by $\zeta$ in $f'(\zeta)=0$, we derive that 
\begin{equation*}
n_r|a_{n_r}|+\cdots+|a_{n_1}|n_1=0,
\end{equation*} 
which is not possible. 
\end{proof}

\begin{proof} [\unskip\nopunct]{\textbf{Proof of Corollary~\ref{cor:pos}:}}
For any $g(x)\in \Z[x]$, $g(x)$ has cyclotomic factor if and only if $g(x^d)$ has a cyclotomic factor, $d\ge 1$. Therefore it is sufficient to prove  the result for polynomials whose exponents are relatively prime. The result follows from Theorem \ref{cyclothm}, and Lemma \ref{lem:basic_cyclo}.
\end{proof}
We now see an application of Corollary~\ref{cor:pos}.  In~\cite{bksr} we showed that, if $(n_1,n_2,\ldots, n_p)=1$ then   $x^{n_1}+x^{n_2}+\cdots+x^{n_p}+p$ is irreducible. The same result  is true when the prime number is replaced with a prime power and the power is not divisible by $2$ and $3$. In particular, if $p$ is a prime number and $n_1>n_2>\ldots> n_p$ are positive integers with $(n_1,n_2, \ldots, n_p)=1, n_p\le 3$, then $x^{n_1}+x^{n_2}+\cdots+x^{n_p}+p^u$ is irreducible for any integer $u$ with  $(u,6)=1$. 

\begin{cor}
Let $f(x)\in \S_1\cup \S_2'\cup \S_3'$ be a polynomial with $n_{j-1}=n_j-1$ for some $j$, $2\le j\le r$ and $|a_{n_r}|+|a_{n_r-1}|+\cdots+|a_{n_1}|=p^u$. Then $f(x)$ is reducible if and only if either $f(1)=0$ or $f(-1)=0$. 
\end{cor}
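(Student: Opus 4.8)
The plan is to derive the corollary from Theorem~\ref{cyclothm}, using the hypothesis $n_{j-1}=n_j-1$ to force the cyclotomic part of $f$ down to a power of $x-1$ or of $x+1$. The ``if'' direction is immediate: since $2\le j\le r$ we have $n_j\ge n_{j-1}+1\ge n_1+1\ge 2$, so $\deg f=n_r\ge 2$, and then $f(1)=0$ makes $x-1$ a proper factor of $f$ while $f(-1)=0$ makes $x+1$ a proper factor; either way $f$ is reducible (this direction does not even use the equality on the coefficients).

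For the ``only if'' direction, suppose $f$ is reducible. Since $f\in\S_1\cup\S_2'\cup\S_3'$ and $|a_{n_r}|+\cdots+|a_{n_1}|=p^u$, Theorem~\ref{cyclothm} applies and yields $f=f_cf_n$ with $f_n$ irreducible and $f_c=\gcd\bigl(x^{n_1}+\sgn(a_{n_1}\ep),\dots,x^{n_r}+\sgn(a_{n_r}\ep)\bigr)$. Because $f_n$ is irreducible while $f$ is not, $f_c$ cannot be constant; as each $x^{n_i}+\sgn(a_{n_i}\ep)$ is of the form $x^{n_i}\pm1$ and hence splits into cyclotomic factors, $f_c$ is a nonconstant product of cyclotomic polynomials and so has a root $\zeta$, necessarily a root of unity, satisfying $\zeta^{n_i}=-\sgn(a_{n_i}\ep)$ for all $1\le i\le r$.

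Now I invoke the consecutive pair: from $\zeta^{n_j}=-\sgn(a_{n_j}\ep)$ and $\zeta^{n_{j-1}}=-\sgn(a_{n_{j-1}}\ep)$ together with $n_j-n_{j-1}=1$ we obtain $\zeta=\zeta^{n_j-n_{j-1}}=\sgn(a_{n_{j-1}}\ep)\,\sgn(a_{n_j}\ep)\in\{1,-1\}$. Since $\zeta$ is a root of $f_c$ and $f_c\mid f$, this forces $f(1)=0$ when $\zeta=1$ and $f(-1)=0$ when $\zeta=-1$, which finishes the proof.

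I expect no genuine obstacle here; the only point requiring a moment's care is that the reducibility of $f$ must be attributed to $f_c$ rather than to $f_n$, which is exactly what the irreducibility of $f_n$ in Theorem~\ref{cyclothm} provides, together with the elementary fact that a nonconstant divisor of $x^{n_i}\pm1$ has a root of unity among its roots. The consecutive-exponent hypothesis is precisely what prevents $f_c$ from being divisible by a cyclotomic polynomial $\Phi_m$ with $m\ge 3$, so it cannot be dropped; alternatively one could run the argument directly on an arbitrary cyclotomic factor of $f$ via the relations $\zeta^{n_i}=-\sgn(a_0a_{n_i})$ derived in the proof of Proposition~\ref{pro:fc(x)}.
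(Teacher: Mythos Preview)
Your proof is correct and follows essentially the same route as the paper: both apply Theorem~\ref{cyclothm} to conclude that reducibility forces $f_c\ne 1$, and then use the consecutive-exponent hypothesis to pin $f_c$ down to $x-1$ or $x+1$. The only cosmetic difference is that the paper invokes Lemma~\ref{lem:basic_cyclo} to compute $(x^{n_j}\pm 1,\,x^{n_j-1}\pm 1)\in\{1,\,x-1,\,x+1\}$ directly, whereas you pick a root $\zeta$ of $f_c$ and obtain $\zeta=\zeta^{n_j-n_{j-1}}\in\{\pm 1\}$ from the relations $\zeta^{n_i}=-\sgn(a_{n_i}\ep)$; these are two phrasings of the same computation.
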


From Lemma \ref{lem:basic_cyclo}, $(x^n\pm 1 , x^{n-1}\pm 1)$ is either $1$ or $x\pm 1$. Hence the proof of the above Corollary  follows directly by applying  Theorem~\ref{cyclothm}.

\section{Applications}\label{sec:app}
Suppose $u\ge 2$ and $a,b,p\in \N, p$ being a prime number, $p\nmid ab$. In this section we consider the trinomials of the form $f(x)=ax^n+b\ep_1x^m+p^u\ep_2,$ where $\ep_i\in \{\, -1,+1\,\}$ and $n>m>0$. One can see that results in the previous section are applicable for trinomials. In this section, we will discuss the reducibility of $f$ in the case $p^u=a+b$. From above, we know  
\begin{theorem}
 Let $f(x)=ax^{n}+b\ep_1x^{m}+p^u\ep_2\in \S_1\cup \S_2'\cup \S_3'$  and $p^u=a+b$. Then apart from cyclotomic factors, $f(x)$ has only  one irreducible non-reciprocal  polynomial.  
\end{theorem}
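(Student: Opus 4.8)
The plan is to deduce this directly from Theorem~\ref{cyclothm} specialized to trinomials, i.e.\ to the case $r=2$. First I would dispose of the case in which $f$ is irreducible: then $f$ itself is the unique irreducible factor, and it must be non-reciprocal. Indeed, by the same estimate used in the proof of Theorem~\ref{cyclothm}, every root of $f$ lies in the region $|z|\ge 1$; were $f$ reciprocal, the reciprocal of each (necessarily nonzero) root would also be a root, forcing every root onto the unit circle, so by Kronecker's theorem $\pm f$ would be a product of cyclotomic polynomials and hence $|f(0)|=1$, contradicting $|f(0)|=p^u\ge 4$.

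Now suppose $f$ is reducible. I would apply Theorem~\ref{cyclothm} with $n_r=n$, $a_{n_r}=a$, $n_1=m$, $a_{n_1}=b\ep_1$ and $\ep=\ep_2$; the hypothesis $p^u=a+b$ is exactly $|a_{n_r}|+|a_{n_1}|=p^u$, and membership in $\S_1\cup\S_2'\cup\S_3'$ is assumed. Theorem~\ref{cyclothm} then yields $f(x)=f_c(x)f_n(x)$ with $f_n$ an irreducible non-reciprocal factor and, using $a,b>0$,
\begin{equation*}
f_c(x)=\gcd\bigl(x^{n}+\sgn(a\ep_2),\ x^{m}+\sgn(b\ep_1\ep_2)\bigr)=\gcd\bigl(x^{n}+\ep_2,\ x^{m}+\ep_1\ep_2\bigr).
\end{equation*}
Each of the four sign combinations falls under Lemma~\ref{lem:basic_cyclo} (or, when both signs reduce to $-1$, under the standard identity $(x^{n}-1,x^{m}-1)=x^{(n,m)}-1$), so $f_c$ equals $1$, or $x^{d}-1$, or $x^{d}+1$ for a suitable $d\mid(n,m)$. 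In every case $f_c$ divides $x^{2n}-1$, hence is a product of cyclotomic polynomials.

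It remains to package this. Since $|f(0)|=p^u$ with $u\ge2$, the factor $f_n$ is non-constant; and because $f_c$ is a product of (reciprocal) cyclotomic polynomials while $f_n$ is not cyclotomic, $f_n$ occurs with multiplicity one and is the only irreducible non-reciprocal factor of $f$ --- if $f_n^2\mid f$ then $f_n\mid f_c$, which is impossible. This is precisely the claim. The one point that needs care is matching the sign $\sgn(b\ep_1\ep_2)$ with the correct case of Lemma~\ref{lem:basic_cyclo}, but this is entirely routine; the substantive work has already been carried out in Theorem~\ref{cyclothm}, so the statement is in effect a corollary and presents no genuine obstacle.
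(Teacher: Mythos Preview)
Your proof is correct and follows the same approach as the paper, which presents the theorem without a separate proof, simply prefacing it with ``From above, we know'' --- i.e.\ it is an immediate specialization of Theorem~\ref{cyclothm} to $r=2$. One minor technical point: in the irreducible case your Kronecker argument tacitly assumes $f$ is monic, but when $a>1$ the roots need not be algebraic integers; the quickest fix is to note directly that if $f$ were reciprocal then its leading and constant coefficients would agree in absolute value, giving $a=p^u$, which contradicts either $p\nmid a$ (from the definition of $\S_{n_1}$) or $p^u=a+b$ with $b\ge 1$.
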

From Theorem \ref{cyclothm}, all such $f(x)$ are separable over $\Q$. Here $m\le 3$. However, one can find the separability criterion for a bigger class of  trinomials with arbitrary values of $m$ by using the discriminant formula.  

\begin{theorem}[C.R. Greenfield, D. Drucker~\cite{GD}]\label{th0}
The discriminant of the trinomial $x^n+ax^m+b$ is 
\begin{equation*}
D=(-1)^{\binom{n}{2}}b^{m-1}\left[ n^{n/d}b^{n-m/d}-(-1)^{n/d}(n-m)^{n-m/d}m^{m/d}a^{n/d}\right]^d,
\end{equation*}
where $d=(n,m),$ and $a,b\in \Z\setminus \{\, 0\,\}$.
\end{theorem}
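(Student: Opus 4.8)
The plan is to pass to a resultant and then exploit the rigid shape of $f'$. Throughout write $f(x)=x^{n}+ax^{m}+b$, let $\alpha_{1},\dots,\alpha_{n}$ be its roots, and recall that for a monic polynomial $D=(-1)^{\binom{n}{2}}\operatorname{Res}(f,f')=(-1)^{\binom{n}{2}}\prod_{i=1}^{n}f'(\alpha_{i})$. The crucial observation is the polynomial identity $xf'(x)=nf(x)-\bigl((n-m)a\,x^{m}+nb\bigr)$, immediate from $f'(x)=nx^{n-1}+amx^{m-1}$. Evaluating at a root gives $f'(\alpha_{i})=-\bigl((n-m)a\,\alpha_{i}^{m}+nb\bigr)/\alpha_{i}$, and since $\prod_{i}\alpha_{i}=(-1)^{n}b$ we obtain
\[
\operatorname{Res}(f,f')=\frac{(-1)^{n}\prod_{i}\bigl((n-m)a\,\alpha_{i}^{m}+nb\bigr)}{\prod_{i}\alpha_{i}}=\frac{\bigl((n-m)a\bigr)^{n}}{b}\,\prod_{i=1}^{n}\bigl(\alpha_{i}^{m}-c\bigr),\qquad c:=-\frac{nb}{(n-m)a}.
\]

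The next step evaluates $\prod_{i}(\alpha_{i}^{m}-c)$. Fix $\zeta$ with $\zeta^{m}=c$ and let $\omega$ be a primitive $m$-th root of unity; factoring $\alpha_{i}^{m}-c=\prod_{k=0}^{m-1}(\alpha_{i}-\zeta\omega^{k})$, interchanging the two products, and using $f(x)=\prod_{i}(x-\alpha_{i})$ gives $\prod_{i}(\alpha_{i}^{m}-c)=(-1)^{nm}\prod_{k=0}^{m-1}f(\zeta\omega^{k})$. Because $\omega^{km}=1$ and $\zeta^{m}=c$ one has $f(\zeta\omega^{k})=\zeta^{n}\omega^{kn}+ac+b$, and $ac+b=-mb/(n-m)$ by a one-line computation. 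Writing $d=\gcd(n,m)$, the element $\omega^{n}$ has order $m/d$, so $k\mapsto\omega^{kn}$ runs through each $(m/d)$-th root of unity exactly $d$ times; hence, using the elementary identity $\prod_{\eta^{N}=1}(A+B\eta)=A^{N}-(-1)^{N}B^{N}$ together with $\zeta^{nm/d}=c^{\,n/d}$,
\[
\prod_{k=0}^{m-1}f(\zeta\omega^{k})=\Bigl(\prod_{\eta^{m/d}=1}\bigl(\zeta^{n}\eta+ac+b\bigr)\Bigr)^{\!d}=\Bigl((ac+b)^{m/d}-(-1)^{m/d}c^{\,n/d}\Bigr)^{\!d}.
\]
Substituting the values of $c$ and of $ac+b$ and extracting the common factors $b$, $a$, $n-m$ from inside the bracket turns the right-hand side, after collecting the $\bigl((n-m)a\bigr)^{n}/b$ prefactor above, into $b^{m-1}$ times the $d$-th power of $n^{n/d}b^{(n-m)/d}-(-1)^{n/d}(n-m)^{(n-m)/d}m^{m/d}a^{n/d}$, up to a global sign.

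It remains to fix that sign. One may simply track the accumulated powers of $-1$ and check that they cancel, which reduces to the parity fact that $mn+m+n+\gcd(m,n)$ is always even — a four-case check on the parities of $m$ and $n$. A cleaner alternative is to note that both $D$ and the asserted expression $E$ lie in $\Z[a,b]$, so the already-proved relation $D=\pm E$ can be resolved by specializing $a=0$: there $f=x^{n}+b$, whose discriminant is the classical $(-1)^{\binom{n}{2}}n^{n}b^{n-1}$, while $E$ collapses to $(-1)^{\binom{n}{2}}b^{m-1}\bigl(n^{n/d}b^{(n-m)/d}\bigr)^{d}=(-1)^{\binom{n}{2}}n^{n}b^{n-1}$, forcing the sign to be $+1$. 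I expect the middle paragraph — in particular pinning down exactly how $k\mapsto\omega^{kn}$ distributes over the $(m/d)$-th roots of unity and shepherding the constants $c$ and $ac+b$ through the factorization without arithmetic error — to be the main obstacle; the rest is bookkeeping and the discriminant–resultant dictionary.
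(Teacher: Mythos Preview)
The paper does not prove this theorem at all; it is quoted verbatim from Greenfield and Drucker and used only as input to the separability analysis of Theorem~\ref{sepa}. There is thus no proof in the paper to compare your proposal against.

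Your argument is correct and is essentially the standard resultant derivation of the trinomial discriminant. The identity $xf'(x)=nf(x)-\bigl((n-m)ax^{m}+nb\bigr)$ is right, the passage from $\prod_i(\alpha_i^{m}-c)$ to $(-1)^{nm}\prod_{k}f(\zeta\omega^{k})$ is justified, and the key counting step---that $k\mapsto\omega^{kn}$ hits each $(m/d)$-th root of unity exactly $d$ times---is exactly the mechanism that produces the outer exponent $d$. After substituting $c=-nb/\bigl((n-m)a\bigr)$ and $ac+b=-mb/(n-m)$ and pulling out $b^{m/d}$ from each bracket, the prefactor $\bigl((n-m)a\bigr)^{n}/b$ collapses against the denominators to leave $b^{m-1}$, and the sign reduces precisely to your parity claim that $mn+m+n+\gcd(m,n)$ is always even, which is immediate from the four parity cases. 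Your alternative sign check via $a=0$ is also valid: although the intermediate quantity $c$ is undefined there, the final identity $D=\pm E$ holds between two polynomials in $\Z[a,b]$, so it extends and can be tested at $a=0$, where both sides equal $(-1)^{\binom{n}{2}}n^{n}b^{n-1}$.

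One cosmetic remark: the exponents in the paper's display should be read as $(n-m)/d$, not $n-m/d$; you have interpreted this correctly.
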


 Note that  if $h(x)\in \Z[x],h(0)\ne 0,$ then $h(x)$ is separable if and only if $h(x^k)$ is separable for every $k\ge 1.$ Hence in order to 
check separability of polynomials whose constant term is nonzero, it is sufficient to consider the polynomials whose gcd of the exponents is $1.$

\begin{theorem}\label{sepa}
Let $a,b,p\in \N$, $p$ be a prime number, $p\nmid ab$ and $(a,b)=1$,  $(n,m)=1$. Then $f(x)=ax^{n}+b\ep_1x^{m}+p^u\epsilon_2,$ where $u\ge 2, b<p^u$  and $\epsilon_i\in\{-1,1\}$ is not  separable over $\Q$ if and only if  $b=n, p|m,  p^{u(n-m)}a^m=(n-m)^{n-m}m^m, \ep_2^{n-m}(-\ep_1)^n=1$. 
\end{theorem}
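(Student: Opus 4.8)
The plan is to reduce separability of $f$ to the non-vanishing of a trinomial discriminant and then invoke Theorem~\ref{th0}. Since $m<n$, the substitution $x\mapsto y/a$ followed by multiplication by $a^{n-1}$ turns $f$ into the monic integer trinomial
\begin{equation*}
g(y)=a^{n-1}f(y/a)=y^{n}+b\epsilon_1 a^{\,n-1-m}y^{m}+p^{u}\epsilon_2 a^{\,n-1},
\end{equation*}
whose roots are $a$ times those of $f$; hence $f$ is separable over $\Q$ if and only if $\mathrm{disc}(g)\neq 0$. Applying Theorem~\ref{th0} with $d=(n,m)=1$ and collecting powers of $a$ via $(n-1)(n-m)=n(n-1-m)+m$, one obtains $\mathrm{disc}(g)=c\cdot\big[n^{n}p^{u(n-m)}\epsilon_2^{\,n-m}a^{m}-(-\epsilon_1)^{n}(n-m)^{n-m}m^{m}b^{n}\big]$ with $c\neq 0$. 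As the first summand is a positive integer, the bracket vanishes exactly when
\begin{equation*}
\epsilon_2^{\,n-m}(-\epsilon_1)^{n}=1\qquad\text{and}\qquad n^{n}p^{u(n-m)}a^{m}=(n-m)^{n-m}m^{m}b^{n}.\tag{$\star$}
\end{equation*}
So it remains to show that $(\star)$, under the standing hypotheses, is equivalent to ``$b=n$, $p\mid m$, and $p^{u(n-m)}a^{m}=(n-m)^{n-m}m^{m}$'' together with the sign condition.

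For the forward direction I would argue with coprimality and $p$-adic valuations. From the second identity in $(\star)$, $b^{n}$ divides $n^{n}p^{u(n-m)}a^{m}$; since $p\nmid b$ and $\gcd(a,b)=1$, this forces $b^{n}\mid n^{n}$, hence $b\mid n$. Symmetrically $n^{n}$ divides $(n-m)^{n-m}m^{m}b^{n}$, and $\gcd(n,m)=1$ gives $\gcd(n,(n-m)m)=1$, so $n^{n}\mid b^{n}$ and $n\mid b$. Therefore $b=n$, and $(\star)$ collapses to $p^{u(n-m)}a^{m}=(n-m)^{n-m}m^{m}$. Since $u(n-m)\geq 1$ and $p\nmid a$, the prime $p$ divides $(n-m)^{n-m}m^{m}$, and as $\gcd(n-m,m)=1$ it divides exactly one of $n-m$ and $m$. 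If $p\mid n-m$, comparing the exponent of $p$ on both sides yields $u(n-m)=(n-m)\,v_{p}(n-m)$, so $p^{u}\mid n-m$; but then $p^{u}\leq n-m\leq n-1<n=b<p^{u}$, a contradiction. Hence $p\mid m$ and all four conditions hold.

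The converse is a direct substitution: assuming $b=n$, $p^{u(n-m)}a^{m}=(n-m)^{n-m}m^{m}$, and $\epsilon_2^{\,n-m}(-\epsilon_1)^{n}=1$, the bracket in $\mathrm{disc}(g)$ equals $n^{n}(n-m)^{n-m}m^{m}\big(\epsilon_2^{\,n-m}-(-\epsilon_1)^{n}\big)$, which is $0$ because $\epsilon_2^{\,n-m},(-\epsilon_1)^{n}\in\{-1,1\}$ with product $1$ must be equal. Thus $\mathrm{disc}(g)=0$ and $f$ is not separable; note that $p\mid m$ is not needed here, being automatically implied above.

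The discriminant bookkeeping in the first step — tracking the powers of $a$ introduced by $x\mapsto y/a$ and the signs $(-1)^{\binom{n}{2}},(-1)^{n}$ — is routine. The crux is the number theory in the forward direction: the paired divisibilities $b^{n}\mid n^{n}$ and $n^{n}\mid b^{n}$ that pin down $b=n$, and the valuation argument excluding $p\mid n-m$. The hypothesis $b<p^{u}$ is used exactly there; it breaks the symmetry between the exponents $m$ and $n-m$ (for the reciprocal $\tilde f$ these roles swap, but its constant term $a$ is no longer a prime power), pinning the prime $p$ to $m$ rather than to $n-m$.
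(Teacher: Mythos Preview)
Your proof is correct and follows essentially the same route as the paper: reduce to the vanishing of the trinomial discriminant via Theorem~\ref{th0}, then use the coprimality hypotheses to pin down $b=n$ and a $p$-adic valuation count together with $b<p^{u}$ to force $p\mid m$ rather than $p\mid n-m$. The only cosmetic difference is that you first pass to the monic polynomial $g(y)=a^{n-1}f(y/a)$ before invoking Theorem~\ref{th0}, whereas the paper writes down the discriminant of $f$ directly; your explicit tracking of the power of $a$ via $(n-1)(n-m)=n(n-1-m)+m$ makes this step transparent.
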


\begin{proof} From Theorem \ref{th0}, the discriminant of $f$ is 
\begin{equation*}
D_f=(-1)^{\binom{n}{2}}(p^u\epsilon_2)^{n-m}a^{n-m-1}\left[ n^{n/d}(p^u\epsilon_2)^{n-m/d}a^{m/d}-(-1)^{n/d}(n-m)^{n-m/d}m^{m/d}(b\epsilon_1)^{n/d}\right]^d,
\end{equation*}
where $d=(n,m)$. Here $d=1.$
Now  $f(x)$ has a multiple root if and only if $D_f=0$, {\it i.e.,}
\begin{equation}\label{eq:separable}
n^{n}(p^u\epsilon_2)^{n-m}a^m=(-1)^{n}(n-m)^{n-m}m^{m}(b\epsilon_1)^{n}.
\end{equation}

Since $(n,m)=(n,n-m)=1$ and $n^{n}|(-1)^{n}(n-m)^{n-m}m^{m}(b\epsilon_1)^{n}$, we have $n|b.$
Let $b=ns$ for some $s\in \N.$  Equation~(\ref{eq:separable}) then becomes 
$$(p^u\epsilon_2)^{n-m}a^m=(-1)^{n}(n-m)^{n-m}m^{m}(\epsilon_1)^{n}s^n.$$ Thus we have $s^n|(p^u\epsilon_2)^{n-m}a^m$ and from  the hypothesis we have $(p,s)=(a,s)=1.$ Hence $s=1$ and $b=n.$  The last equation reduces to 
\begin{equation*}
p^{u(n-m)}\ep_2^{n-m}a^m=(-\ep_1)^n(n-m)^{n-m}m^m.
\end{equation*}

As a consequence, either  $p|m$ or $p|n-m.$ In order to show that $p|m,$ it is sufficient to show $p\nmid n-m.$ If $n-m=1$, then $p\nmid n-m.$ Suppose $n-m>1$ and $p|n-m.$ From above equation we have $p^u|n-m.$ Consequently, $p^u\le n-m<n=b<p^u$ is a contradiction. Finally the equations
\begin{equation*}
p^{u(n-m)}a^m=(n-m)^{n-m}m^m,
\end{equation*}
and $\ep_2^{n-m}(-\ep_1)^n=1$ 
 follow easily from the last equation. Converse part is clear. But for the converse we do not require $p|m.$
\end{proof}

The following  example illustrates  all the  conditions given in Theorem~\ref{sepa}. Let $p$ be an odd prime. Then we have  
\begin{equation*}
x^{p+1}+(p+1)x^p+p^p=(x+p)^2g(x),\mbox{where}\;\;g(x)\in \Z[x].
\end{equation*}

\begin{cor}
Let $f(x)=ax^{n}+b\ep_1x^{m}+p^u\ep_2\in (\S_2\setminus \S_2')\cup (\S_3\setminus \S_3')$, where $\epsilon_i\in\{-1,1\}$, $(a,b)=(m,n)=1$  and $b<p^u.$ Then $f(x)$ is separable over $\Q$ except for  $x^3+3\ep_1x^2-4\ep_1$ and $x^4+4\ep_1x^3+27$. 
\end{cor}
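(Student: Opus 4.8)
The plan is to specialize Theorem~\ref{sepa} to the cases $n_1 = m \in \{2,3\}$ allowed by the hypotheses $f \in (\S_2\setminus\S_2')\cup(\S_3\setminus\S_3')$, and to check that the Diophantine obstruction $p^{u(n-m)}a^m = (n-m)^{n-m}m^m$ (together with $b=n$, $p\mid m$, and the sign condition) admits only the two listed solutions. First I would invoke Theorem~\ref{sepa}: since $(a,b)=(m,n)=1$, $p\nmid ab$, $u\ge 2$ and $b<p^u$, inseparability over $\Q$ holds if and only if $b=n$, $p\mid m$, $p^{u(n-m)}a^m=(n-m)^{n-m}m^m$, and $\ep_2^{n-m}(-\ep_1)^n=1$. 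The condition $p\mid m$ with $m\in\{2,3\}$ forces $p=m$; thus $m=2,p=2$ or $m=3,p=3$. In particular $b=n$ and $m\mid n$ is impossible unless... wait, $(m,n)=1$ is assumed, so there is no conflict, but one should note $m\mid b=n$ would contradict $(m,n)=1$ unless $m=1$; however $b=n$ does not say $m\mid n$, so this is fine.

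Next I would handle the two cases. For $m=2$, $p=2$: the equation becomes $2^{u(n-2)}a^2 = (n-2)^{n-2}2^2$, i.e. $2^{u(n-2)-2}a^2=(n-2)^{n-2}$. Since $b=n$ and $b<p^u=2^u$ we get $n<2^u$, and since $a$ is odd (as $p\nmid ab$), the power of $2$ on the left is exactly $u(n-2)-2$, which must equal $v_2((n-2)^{n-2})=(n-2)v_2(n-2)$. A short case analysis on $n-2$: if $n-2=1$ then $n=3$, $m=2$, but then $(m,n)=(2,3)=1$ is fine, and the equation reads $2^{u-2}a^2=1$, forcing $u=2,a=1$; the sign condition $\ep_2^{1}(-\ep_1)^{3}=1$ gives $\ep_2=-\ep_1$, and $b=n=3$, $p^u=4$, yielding $x^3 - \ep_1 x^2 + \cdots$? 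Here I need to be careful about which of the two listed examples this matches — it should reproduce $x^3+3\ep_1 x^2-4\ep_1$ after renaming signs. If $n-2\ge 2$ then $(n-2)^{n-2}$ has an odd prime factor (or is a power of $2$ only when $n-2$ is a power of $2$), and matching $a^2\cdot 2^{u(n-2)-2}$ forces $n-2$ to be a perfect square times a power of two in a way that, combined with $n<2^u$ and $a$ odd, leaves no solutions; I would grind this elementary bound. For $m=3$, $p=3$: the equation is $3^{u(n-3)}a^3=(n-3)^{n-3}3^3$, i.e. $3^{u(n-3)-3}a^3=(n-3)^{n-3}$, with $n=b<3^u$ and $3\nmid a$. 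If $n-3=1$, $n=4$, $u=? $: $3^{u-3}a^3=1$ gives $u=3$? No: $u(n-3)-3 = u-3$, so $3^{u-3}a^3=1$ forces $u=3$? That gives $3^0 a^3 = 1$, wait $u-3=0$ means $u=3$ but then $b=n=4<3^3=27$ holds, $a=1$; sign condition $\ep_2^{1}(-\ep_1)^4 = \ep_2 = 1$, so $\ep_2=1$, giving $x^4+4\ep_1 x^3+27$ — matching the second listed example. Actually I should double-check whether $u=2$ could work: then $u-3=-1<0$, impossible since the left side must be a positive integer; so $u=3$ is forced. For $n-3\ge 2$ a similar valuation-plus-size argument rules everything out.

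The main obstacle is the finite but slightly fiddly Diophantine case analysis in the subcases $n-m\ge 2$: one must show that $2^{u(n-2)-2}a^2=(n-2)^{n-2}$ and $3^{u(n-3)-3}a^3=(n-3)^{n-3}$ have no solutions with $a$ coprime to $p$, $u\ge 2$, and $m<n<p^u$. The clean way is to compare $p$-adic valuations (the left side has valuation $u(n-m)-m$, determined entirely by the power of $p$ since $p\nmid a$; the right side has valuation $(n-m)v_p(n-m)$) and to observe that for $n-m\ge 2$ the inequality $n=b<p^u$ forces $u$ small relative to $n$, while the valuation equation forces $u$ large, giving a contradiction; any residual cases where $n-m$ is itself a power of $p$ are killed by looking at a second prime dividing $(n-m)^{n-m}$ or by a direct size estimate $a^{m}=\bigl((n-m)/p^{\,u}\bigr)^{n-m}\cdot p^{m}<1$ when $n-m\ge p^{u}$.

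Finally, I would assemble the two surviving solutions, translate them back into the trinomial $f(x)=ax^n+b\ep_1 x^m+p^u\ep_2$ with the determined values $(a,b,n,m,p,u)$ and the sign relation $\ep_2^{n-m}(-\ep_1)^n=1$, and verify they are exactly $x^3+3\ep_1 x^2-4\ep_1$ (the $\S_3\setminus\S_3'$ example with $p=2$, $u=2$, $n=3$, $m=2$) and $x^4+4\ep_1 x^3+27$ (the $\S_3\setminus\S_3'$ example with $p=3$, $u=3$, $n=4$, $m=3$), checking membership in $(\S_2\setminus\S_2')\cup(\S_3\setminus\S_3')$ in each case, which completes the proof.
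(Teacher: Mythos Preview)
Your approach is the same as the paper's: invoke Theorem~\ref{sepa}, deduce $p=m$ from $p\mid m\in\{2,3\}$, and solve the resulting Diophantine constraint. However, you miss the one-line observation that collapses all the ``grinding'' you propose for $n-m\ge 2$. Since $(m,n)=1$ and $p=m$, you have $p\nmid n$, hence $p\nmid n-m$; thus $v_p\bigl((n-m)^{n-m}\bigr)=0$, and your equation $p^{u(n-m)-m}a^m=(n-m)^{n-m}$ forces $u(n-m)=m$. With $u\ge 2$ and $m\in\{2,3\}$ this gives only $(u,n-m)=(2,1)$ for $m=2$ and $(u,n-m)=(3,1)$ for $m=3$, so the case $n-m\ge 2$ is vacuous and no size estimates are needed. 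The paper's proof does exactly this (phrased as ``$n$ is odd'' when $m=2$ and ``$3\nmid n-3$'' when $m=3$).

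One small slip: at the end you label $x^3+3\ep_1 x^2-4\ep_1$ as belonging to $\S_3\setminus\S_3'$, but with $m=2$, $p=2$, $u=2$ it lies in $\S_2\setminus\S_2'$.
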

\begin{proof} \begin{description}
               \item[m=2:] Let $f\in \S_2\setminus \S_2'$ and satisfying the conditions of hypothesis. If $f$ is not separable, then from  Theorem~\ref{sepa} $p|m$ implies $p=2$ and $n$ is odd. Further we have $2^{u(n-2)}a^2=(n-2)^{n-2}4.$ It is easy to see that if $u\ne 2$ or $n\ne 3,$ then we get a contradiction. If $u=2$ and $n=3,$ then $f(x)=x^3+3\ep_1x^2-4\ep_1=(x-\ep_1)(x+2\ep_1)^2$ follows from $\ep_2\ep_1=-1$. 
               \item[m=3:] Let $f\in \S_3\setminus \S_3'$ and satisfying the conditions of hypothesis. If $f$ is not separable, then from  Theorem~\ref{sepa} $p|m$ implies $p=3$ and  $3\nmid n-3$. If $n\ge 5,$ then $u$ being greater than 1, $3|n-3$ is a contradiction. If $n=4,$ then $u=3$ and $a^3=1$. In other words, $f(x)=x^4+4\ep_1x^3+27\ep_2$. From $\ep_2(-\ep_1)^4=1$, we get $\ep_2=1$ and  $x^4+4\ep_1x^3+27=(x+3\ep_1)^2(x^2-2\ep_1x+3)$.
              \end{description}

\end{proof}

Since trinomials in $\S_1\cup \S_2'\cup \S_3'$ have simple zeros, $\Phi_t(x)^2\nmid f(x)$ for any $t\ge 1$. With this observation, we will characterize the irreducibility criteria for trinomials. One can recall Proposition~\ref{pro:fc(x)} to know the cyclotomic factors of $ax^n+b\ep_1 x^m+\ep_2 p^u,$ where $p^u=a+b$ and $u\ge 2$ for any $m\ge 1.$

\begin{theorem}\label{thm:redoftri}
Let $p$ be a prime, $a,b\in \N$ and  $u\ge 2, p^u=a+b$. Let $f(x)=ax^{n}+b\ep_1x^{m}+p^u\ep_2\in \S_1\cup \S_2'\cup \S_3'$ be a polynomial of degree $n$.   
\begin{enumerate}
\item \label{thm:redoftri:1}  If $\ep_1=1$, then $f(x)$ is reducible if and only if $e(n)=e(m)$ and in that case $f_c(x)=x^{(n,m)}+\sgn(\ep_2).$
\item \label{thm:redoftri:2} If $\ep_1=-1,$ then  $f(x)$ is reducible if and only if  $\ep_2e(m)>\ep_2e(n).$ Moreover the reciprocal part of $f(x)$ is  $f_c(x)=x^{(n,m)}+1$.
\end{enumerate}
\end{theorem}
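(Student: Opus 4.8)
My plan is to reduce the whole statement to a single greatest common divisor of two binomials and then read it off from Lemma~\ref{lem:basic_cyclo}. The three steps are: (i) observe that in the equality regime $p^u=a+b$ reducibility of $f$ is equivalent to $f_c(x)\neq 1$; (ii) compute $f_c$ explicitly; (iii) evaluate it in the four sign cases.

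For step (i): the hypothesis $p^u=a+b$ is exactly $|a_0|=|a_{n_1}|+|a_{n_r}|$, so by the remark following Proposition~\ref{pro:fc(x)}, $f$ has a cyclotomic factor if and only if $f_c(x)\neq1$; and since $\deg f_c\le (n,m)\le m<n=\deg f$, any such cyclotomic factor is a proper factor, so $f$ is reducible whenever $f_c\neq1$. Conversely, if $f$ is reducible, Theorem~\ref{cyclothm} applies (as $f\in\S_1\cup\S_2'\cup\S_3'$ with $|a_{n_r}|+|a_{n_1}|=p^u$) and gives $f=f_c f_n$ with $f_n$ irreducible and non-reciprocal and $f_c$ the product of all cyclotomic factors of $f$; in particular $f_c\neq1$. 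Hence $f$ is reducible $\iff f_c\neq1$, and in that case $f_c$ is exactly the reciprocal part of $f$ and $f_n$ its unique non-reciprocal irreducible factor, which already settles the ``moreover'' clauses once $f_c$ is known.

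For step (ii): since $a,b,p^u$ are positive, $\sgn(a_0a_{n_r})=\sgn(\ep_2)=\ep_2$ and $\sgn(a_0a_{n_1})=\sgn(\ep_1\ep_2)=\ep_1\ep_2$, so
\[
f_c(x)=\gcd\bigl(x^{n}+\ep_2,\ x^{m}+\ep_1\ep_2\bigr).
\]
For step (iii) we split on $\ep_1$. If $\ep_1=1$ this is $\gcd(x^n+\ep_2,x^m+\ep_2)$, two binomials of the same sign: when $\ep_2=1$ the first case of Lemma~\ref{lem:basic_cyclo} gives $f_c=x^{(n,m)}+1$ precisely when $e(n)=e(m)$ and $f_c=1$ otherwise, while when $\ep_2=-1$ the identity $(x^n-1,x^m-1)=x^{(n,m)}-1$ applies and $f_c$ is never trivial --- consistently, here $f(1)=a+b-p^u=0$, so $x-1\mid f$. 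In both subcases $f_c=x^{(n,m)}+\sgn(\ep_2)$ whenever $f$ is reducible, which is part~\ref{thm:redoftri:1}. If $\ep_1=-1$ we get $\gcd(x^n+\ep_2,x^m-\ep_2)$, a mixed gcd handled by the second case of Lemma~\ref{lem:basic_cyclo}: for $\ep_2=1$ it is $x^{(n,m/2)}+1$ if $e(m)\ge 2e(n)$ and $1$ otherwise; for $\ep_2=-1$ it equals $\gcd(x^m+1,x^n-1)=x^{(m,n/2)}+1$ if $e(n)\ge 2e(m)$ and $1$ otherwise. Because $e(n)$ and $e(m)$ are powers of $2$, ``$e(m)\ge 2e(n)$'' is the same as ``$e(m)>e(n)$'', and both subcases are captured uniformly by $\ep_2 e(m)>\ep_2 e(n)$; a short $2$-adic valuation check then shows $(n,m/2)=(n,m)$ when $e(m)>e(n)$ (and symmetrically when $e(n)>e(m)$), so in all reducible cases $f_c=x^{(n,m)}+1$, which is part~\ref{thm:redoftri:2}.

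I expect essentially all the work to be in step (iii): matching the two branches of Lemma~\ref{lem:basic_cyclo} to the four sign patterns, verifying $e(m)\ge 2e(n)\Leftrightarrow e(m)>e(n)$ for powers of $2$, and checking $(n,m/2)=(n,m)$ under that hypothesis so that the two forms of $f_c$ agree. None of it is deep, but it has to be done carefully to reach the clean uniform statements; the subcase $\ep_1=1$, $\ep_2=-1$ is the one to watch, since there reducibility is forced by $f(1)=0$ regardless of $e(n)$ and $e(m)$.
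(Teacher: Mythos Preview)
Your approach is exactly the paper's: reduce reducibility to $f_c\neq 1$ via Theorem~\ref{cyclothm}, then evaluate $f_c=\gcd(x^n+\ep_2,\,x^m+\ep_1\ep_2)$ in the four sign cases using Lemma~\ref{lem:basic_cyclo}. The paper only writes out the case $\ep_1=\ep_2=-1$ and declares the remaining three ``similar''; you are more thorough, and in particular you correctly flag that the subcase $\ep_1=1,\ \ep_2=-1$ does not actually match the ``iff $e(n)=e(m)$'' in part~(\ref{thm:redoftri:1}) as stated, since there $f(1)=a+b-p^u=0$ forces reducibility for all $n,m$ --- a wrinkle the paper's one-case proof does not address.
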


\begin{proof} We prove the result only for the case $\ep_1=\ep_2=-1.$ One can prove the remaining three  cases in similar lines.
In this case, we have to show  that $f(x)$ is reducible if and only if $e(m)<e(n).$

From Theorem~\ref{cyclothm}, $f(x)$ is irreducible if and only if $f_c(x)=1$, where   $f_c(x)$ is the greatest common divisor of $x^n-1$ and $x^m+1$. From Lemma~\ref{lem:basic_cyclo}, we have \begin{equation*}
f_c(x)=(x^{n}-1, x^{m}+1)=\begin{cases} 
x^{(n/2,m)}+1 & \mbox{ if $e(n)\ge 2e(m);$}\\
1 & \mbox{ otherwise.}
\end{cases}
\end{equation*}

Thus $f(x)$ reducible if and only if $e(n)\ge 2e(m)$, that is, if and only if $e(m)<e(n).$ 

Suppose $d=\gcd(m,n)$. Then there exists 
$n_1,m_1\in \N$ such that $n=dn_1, m=dm_1$ and 
$(n_1,m_1)=1.$ If $\ze$ denotes a primitive $2d^{th}$ root of unity, then
$$a \ze^{dn_1}-b\ze^{dm_1}-p^u=a+b-p^u=0,$$
as $\ep_1=\ep_2=-1$, $n_1$ even ($e(n)>e(m)$) and $p^u=a+b.$ 
\end{proof}

We conclude the paper with few comments on generalization of Theorem \ref{mainthm}. It is natural to ask whether Theorem \ref{mainthm} can be extended to arbitrary $n_1$. The example 
\begin{equation*}
x^8+x^6+x^4+4=(x^4-x^3+x^2-2x+2)(x^4+x^3+x^2+2x+2),
\end{equation*}
given by Jankauskas\cite{jankauskas} suggests  that the
 generalization is not possible as $x^8+x^6+x^4+4\in\S_4'$ and is reducible.

The example $x^{p+1}+(p+1)x^p+p^p$  shows that Theorem \ref{mainthm} is not true for $f\in\S_p\backslash \S_p'$, $p$ being odd prime. We conjecture that 
\begin{conj}
Let $p$ and $q$ be two prime numbers, $u\ge 2$ and $\ep\in \{-1,1\}$. 
Suppose $f(x)=a_{n_r}x^{n_r}+\cdots+a_{q}x^{q}+p^u\ep\in \S_q'.$ If $p^u>|a_q|+\cdots+|a_{n_r}|,$ then $f(x)$ is irreducible. 
\end{conj}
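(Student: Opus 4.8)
The plan is to follow exactly the strategy already used in the proof of Lemma~\ref{lem1} and Theorem~\ref{mainlem}, pushing the counting argument from $n_1\in\{1,2,3\}$ to a general prime $n_1=q$. Suppose $f(x)=a_{n_r}x^{n_r}+\cdots+a_qx^q+p^u\ep\in\S_q'$ is reducible, say $f=f_1f_2$ with $\deg f_1=s$, and set $g(x)=f_1(x)\tilde f_2(x)=\sum b_ix^i$. As before, comparing leading coefficients and the coefficient of $x^{n_r}$ in $g(x)\tilde g(x)=f(x)\tilde f(x)$ gives $b_0b_{n_r}=p^u a_{n_r}\ep$ and $\sum b_i^2=p^{2u}+\sum a_{n_i}^2$; write $b_0=p^\alpha d$, $b_{n_r}=p^{u-\alpha}d_1$ with $dd_1=a_{n_r}\ep$. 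The goal is to force $\alpha=0$; then $|f_1(0)|=1$ and, since $|f_1(0)f_2(0)|=p^u$ is a prime power, Kronecker's theorem (as in the proof of Theorem~\ref{cyclothm}) shows $f_1$ is a product of cyclotomic polynomials — but $f$ has all roots strictly outside the unit circle by Remark~\ref{rem:non-reciprocal} under the strict inequality, so $f_1$ is a nonzero constant, contradicting reducibility. Hence $f$ is irreducible.

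So the real content is: \emph{if $f\in\S_q'$ is reducible then $\alpha=0$}. I would argue by contradiction, assuming $1\le\alpha\le u/2$. The coefficient of the second-largest-degree monomial in $f(x)\tilde f(x)$ is $a_{n_r}a_q$ (coming from $x^{2n_r-q}$, provided $n_r\ge n_{r-1}+q$; the small remaining cases $n_r-n_{r-1}<q$ are handled by instead comparing the coefficient of $x^{2n_r-q}$ directly, as in the last paragraph of the proof of Lemma~\ref{lem1}), and $p\nmid a_qa_{n_r}$. Writing $g(x)=b_{n_r}x^{n_r}+b_{j_1}x^{j_1}+\cdots+b_{j_t}x^{j_t}+b_0$ with $0<j_t<\cdots<j_1<n_r$, I would show the coefficient of $x^{2n_r-q}$ (equivalently the second-largest exponent) in $g(x)\tilde g(x)$ is always divisible by $p$. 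The case analysis splits on where $j_t$ and $j_1$ sit relative to $q$ and $n_r-q$: if $j_t\ge q$ and $j_1\le n_r-q$ the top exponent gap is too small and no such factorization exists; if $j_t=q$ or $j_1=n_r-q$ the relevant coefficient contains a factor $b_0$ or $b_{n_r}$, both divisible by $p$; and if $j_t<q$ or $j_1>n_r-q$ one gets, as in Lemma~\ref{lem1}, the forced vanishing relations $b_{n_r}b_{j_t}+b_0b_{j_1}=0$ (when $j_1=n_r-j_t$) and their analogues among the next-inner pairs $(j_{t-1},j_2)$, $(j_{t-2},j_3)$, etc. Each such relation, after substituting $b_0=p^\alpha d$, $b_{n_r}=p^{u-\alpha}d_1$, expresses the outer $b$'s in terms of inner ones with a factor $p^{u-k\alpha}$; the hypothesis $u\not\equiv 0\pmod q$ — hence in particular $u\ne k\alpha$ for the relevant $k$ whenever $\alpha$ divides into the "would-be clean" range — is what prevents the exponent from collapsing to $0$, so $p$ divides the inner coefficients and the chain propagates inward until the coefficient of $x^{2n_r-q}$ is seen to be a sum of terms each divisible by $p$.

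The main obstacle, and the reason this is only a conjecture, is the bookkeeping of the inner chain: for $q=2,3$ the descent stops after examining at most the pairs down to $(j_{t-2},j_3)$, but for general $q$ one must control all monomials of $g(x)\tilde g(x)$ with exponent in the window $(2n_r-q,\,2n_r)$ and in $(n_r+j_1-q,\,n_r+j_1)$, and a priori a configuration of the $j_i$'s could place several nonzero $b$-products into the single slot $x^{2n_r-q}$ in a way that conspires to be a non-multiple of $p$. One would need a clean invariant — I expect an induction on the number of inner pairs, tracking the $p$-adic valuation of $b_{j_k}$ via the vanishing of the coefficients of $x^{2n_r-1},x^{2n_r-2},\dots,x^{2n_r-q+1}$ (all absent on the $f\tilde f$ side) — showing $v_p(b_{j_k})\ge \min(k\alpha,\,u-k\alpha)>0$ as long as $u\not\equiv 0\pmod q$. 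Making that induction uniform in $q$ is precisely the gap; the arithmetic condition $u\not\equiv 0\pmod q$ is exactly the obstruction to the degenerate case $\alpha=u/q$ (the analogue of the $\S_p\setminus\S_p'$ counterexample $x^{p+1}+(p+1)x^p+p^p$), so one expects it to suffice, but a fully general proof requires resolving this combinatorial descent.
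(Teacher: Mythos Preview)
The paper does not prove this statement; it is presented as an open conjecture at the very end of the article, immediately after the counterexample $x^8+x^6+x^4+4\in\S_4'$ showing that the method fails for composite $n_1$. There is therefore no proof in the paper to compare your proposal against.

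Your proposal is not a proof either, and you say so yourself: you outline the natural extension of the Ljunggren-style argument used for $q\le 3$ and then explicitly identify the gap (``Making that induction uniform in $q$ is precisely the gap''). That honest assessment is correct. Two remarks on the sketch itself. First, the sentence ``the hypothesis $u\not\equiv 0\pmod q$ --- hence in particular $u\ne k\alpha$ for the relevant $k$'' overstates what the $\S_q'$ condition buys you: it only excludes $u=q\alpha$. Already in the paper's proof of Lemma~\ref{lem1} for $q=3$ the case $u=2\alpha$ must be handled by a separate ad hoc argument, and for general prime $q$ one would need to dispose of each of $u=2\alpha,\,3\alpha,\dots,(q-1)\alpha$ individually; the number and depth of these side cases is exactly where the combinatorics explodes. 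Second, the proposed invariant $v_p(b_{j_k})\ge\min(k\alpha,u-k\alpha)$ is a plausible target but is not established by the vanishing of the coefficients of $x^{2n_r-1},\dots,x^{2n_r-q+1}$ alone, since for large $q$ several cross-terms $b_{j_a}b_{j_b}$ can land in the same slot and the linear system relating them need not be triangular. So your diagnosis of the obstacle is accurate, but the sketch does not close it, and neither does the paper.
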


\thebibliography{}
\scriptsize
\bibitem{bonciocat1}
A.I. Bonciocat and N.C. Bonciocat, {\em Some classes of irreducible polynomials}, Acta Arith., 123(2006), 349--360. 

\bibitem{bonciocat2}
A.I. Bonciocat and N.C. Bonciocat, {\em On the irreducibility of polynomials with leading coefficient divisible by a large prime power}, Amer. Math. Monthly, 116 (8) (2009), 743--745. 

\bibitem{eisen}
F.G.M. Eisenstein, {\em Uber die Irreducibilitat und einige andere Eigenschaften der Gleichung, von welcher die Theilung der ganzen Lemniscate abhangt}, J. reine angew. Math., 39(1850), 166--167. 

\bibitem{GD}
C.R. Greenfield, D. Drucker, \emph{On the Discriminant of a Trinomial}, Linear Algebra its Appl.,  62(1984), 105--112.

\bibitem{jankauskas}
J. Jankauskas, {\em On the reducibility of certain quadrinomials}, Glas. Mat. Ser. III, 45 (65)(2010), 31--41.

\bibitem{jonassen}
A.T. Jonassen, {\em On the irreducibility of the trinomials $x^n\pm x^m\pm 4$}, Math. Scand., 21(1967), 177--189.

\bibitem{bksr}
Biswajit Koley, A. Satyanarayana Reddy, {\em An irreducibility criterion of polynomials over integers}, Bulletin math\'ematique de la Soci\'et\'e des Sciences Math\'ematiques de Roumanie, to appear. 

\bibitem{lipka}
S. Lipka, {\em \"Uber die Irreduzibilit\"at von Polynomen}, Math. Ann., 118(1941), 235--245.

\bibitem{lju}
W. Ljunggren, {\em On the irreducibility of certain trinomials and 
quadrinomials}, Math. Scand., 8 (1960), 65--70.

\bibitem{myerson}
G. Myerson, {\em Western Number Theory Problems}, 17--19 Dec. 2007, 6. Available online
at http://www.math.colostate.edu/~achter/wntc/problems/problems2007.pdf.

\bibitem{LP}
L. Panitopol, D. Stef\"anescu, \emph{Some criteria for irreducibility of polynomials}, Bull. Math. Soc. Sci. Math. R. S. Roumanie (N. S.), 29 (1985),  69--74.

\bibitem{perron}
O. Perron, {\em Neue kriterien f\"ur die irreduzibilit\"at algebraischer gleichungen}, J. reine angew. Math., 132 (1907), 288--307.

\bibitem{weisner}
L. Weisner, {\em Criteria for the irreducibility of polynomials}, Bull. Amer. Math. Soc., 40(1934), 864--870.
\end{document}